\tikzset{commdiag/.style={matrix of math nodes, row sep=3em, column sep=2.5em, text height=1.5ex, text depth=0.25ex,ampersand replacement=\&},
exseq/.style={commdiag, column sep=2em},
diagequal/.style={double, double distance=2pt, -},
>=stealth,
ineq/.style={baseline=(current  bounding  box.center)}, % chktex 36
}
\tikzset{
	norient/.style={very thick},
	orient/.style={norient, 
		postaction={decorate},
        decoration={markings,mark=at position .55 with {\arrow[draw,fill=black, line width=.5mm]{latex}}}
			},
		stagetree/.style={node distance=2.8cm, 
		cvertex/.style={shape=circle, draw, font=\sffamily, minimum size=2em},
		vertex/.style={cvertex, very thick, fill=OldLace, },
		initfi/.style={draw=FireBrick},
		intermediate/.style={fill=Gainsboro},
	}
}
\newcommand*\demph[1]{\textbf{\emph{#1}}}
\newcommand*\RR{\mathbf{R}}
\newcommand*\NN{\mathbf{N}}
\newcommand*\Man{\mathcal{M}}
\newcommand*\Tan[1][]{\mathsf{T}_{#1}}
\newcommand*\bundle[2]{#1 \leftarrow #2}
\newcommand*\redsec{\mu}
\newcommand*\liealg[1]{\mathfrak{#1}}
\newcommand*\group[1]{\mathsf{#1}}
\NewDocumentCommand\diff{O{\Man}}{\mathfrak{X}\IfNoValueTF{#1}{(\Man)}{(#1)}}
\NewDocumentCommand\Diff{O{\Man}}{\group{Diff}\IfNoValueTF{#1}{(\Man)}{(#1)}}
\newcommand*\GL[1][d]{\group{GL}(#1)}
\newcommand*\OO[1][d]{\group{O}(#1)}
\newcommand*\gl[1][d]{\liealg{gl}(#1)}
\newcommand*\SO[1][d]{\group{SO}(#1)}
\newcommand*\so[1][d]{\liealg{so}(#1)}
\newcommand*\one{\group{1}}
\newcommand*\Id{\mathsf{Id}}
\newcommand*\symgrp{\group{G}}
\newcommand*\symalg{\liealg{g}}
\newcommand*\isogrp{\group{H}}
\newcommand*\isoalg{\liealg{h}}
\newcommand*\redalg{\liealg{m}}
\newcommand*\act{\cdot}
\newcommand*\mul{\,}
\NewDocumentCommand\lifted{O{+}O{\isoc}O{f}}{#3_{#2}^{#1}}
\newcommand*\gb{\bar{g}}
\newcommand*\affgrp[2]{\begin{bmatrix} #1 & #2\\ 0 & 1 \end{bmatrix}}
\newcommand*\affalg[2]{\begin{bmatrix}#1 & #2 \\ 0 & 0 \end{bmatrix}}
\newcommand*\Cinf{\mathcal{C}^{\infty}}
\newcommand*\Lin{\mathcal{L}}
\newcommand*\initial{\circ}
\newcommand*\final{\bullet}
\newcommand*\motion{\Psi}
\newcommand*\trans{\theta}
\newcommand*\Meth{\Phi}
\newcommand*\supp{\operatorname{supp}}
\newcommand*\inv{^{-1}}
\newcommand*\tp{^{\intercal}}
\newcommand*\origin{\mathsf{o}}
\NewDocumentCommand\Jet{O{\origin}O{k}}{J^{#2}_{#1}\Tan\Man}
\newcommand*\prt{\kappa}
\NewDocumentCommand\skel{}{\Sigma}
\NewDocumentCommand\isoc{O{}}{\varpi_{#1}}
\NewDocumentCommand\conn{}{\omega}
\title{Integrators on homogeneous spaces: Isotropy choice and connections}
\author{Hans Munthe-Kaas}
\affil{Department of Mathematics, University of Bergen, Norway}
\author{Olivier Verdier}
\affil{Department of Mathematics and Statistics, Umeå University, Sweden}
\begin{document}

\maketitle

\begin{center}
	Communicated by Ernst Hairer
\end{center}

\begin{abstract}
	We consider numerical integrators of ODEs on homogeneous spaces (spheres, affine spaces, hyperbolic spaces).
	Homogeneous spaces are equipped with a built-in symmetry.
	A numerical integrator respects this symmetry if it is equivariant.
	One obtains homogeneous space integrators by combining a Lie group integrator with an isotropy choice.
	We show that equivariant isotropy choices combined with equivariant Lie group integrators produce equivariant homogeneous space integrators.
	Moreover, we show that the RKMK, Crouch--Grossman or commutator-free methods are equivariant.
	To show this, we give a novel description of Lie group integrators in terms of stage trees and motion maps, which unifies the known Lie group integrators.
	We then proceed to study the equivariant isotropy maps of order zero, which we call connections, and show that they can be identified with reductive structures and invariant principal connections.
	We give concrete formulas for connections in standard homogeneous spaces of interest, such as Stiefel, Grassmannian, isospectral, and polar decomposition manifolds.
	Finally, we show that the space of matrices of fixed rank  possesses no connection.
\end{abstract}

\newcommand{\bullsep}{$\cdot$\ }

\begin{description}
	\item[Keywords]
		Homogeneous spaces 
		\bullsep 
		Symmetric spaces
		\bullsep 
		Lie Group integrators
		\bullsep 
		Connection
		\bullsep 
		Runge--Kutta
		\bullsep
		Skeleton
		\bullsep 
		Stiefel Manifold
		\bullsep
		Lax Pair
		\bullsep
		Grassmannian
		\bullsep
		Projective space
		\bullsep
		Polar decomposition
		\bullsep
		Constant rank matrices
\item[Mathematics Subject Classification (2010)] 
		22F30
		\bullsep
		53C30
		\bullsep
		14M15
		\bullsep
		65Pxx
		\bullsep
		37Mxx
\end{description}

\tableofcontents

\section{Introduction}

% In particular, we give concrete formulas 
% and definitions for integrators on \emph{homogeneous spaces}.
A homogeneous space is a manifold $\Man$ on which a {Lie group} $\symgrp$ acts transitively.
% A simple example is the two-dimensional sphere $\Man = S^2$, and the group $\symgrp = \SO[3]$.
Homogeneous spaces are ubiquitous in numerical analysis and computer vision.
Examples include Stiefel manifolds and spheres in all dimensions~\cite[\S\,IV.9.1]{HaLuWa06}, Grassmannians and projective spaces~\cite{EdArSm98}, symmetric positive definite matrices used in diffusion tensor computing~\cite{PeFiAy06}, isospectral manifolds used in the context of Lax pairs~\cite{CaIsZa97}, and constant rank matrices used in low-rank approximation~\cite[\S\,IV.9.3]{HaLuWa06}.
We study all these cases in thorough details in \autoref{sec:connexamples} and give further examples of homogeneous space in \autoref{sec:conclusion}.

The most convenient way to construct an integrator on a homogeneous space is, as noted in~\cite{IsMKNoZa00}, to combine a Lie group integrator on $\symgrp$ along with an \emph{isotropy map}.
% For example, consider the forward Euler Lie group integrator $x_1 = \exp(\xi) x_0$, defined on a Lie group $\symgrp$.
% It solves the differential equation $x' = \xi \mul x$ where $x\in\symgrp$ and $\xi\in \symalg$, the Lie algebra of $\symgrp$.

We now illustrate how isotropy maps arise with the following example.

\subsection{Example: integration on the sphere}

When a differential equation is defined on a sphere, it is natural to use a Lie group integrator on the three dimensional rotation group.
To do this, however, one needs an isotropy map, as explained below.

\subsubsection{Forward Euler on the rotation group}

Consider first the differential equation $R' = \omega(R) \mul R$, where $R\in\SO[3]$ and $\omega \in \so[3]$, i.e., $R$ is a $3\times 3$ rotation matrix ($R R\tp = 1$), and $\omega$ is a $3\times 3$ skew-symmetric  matrix ($\omega\tp = - \omega$).
The Lie group version of the forward Euler method with time step $h$ is defined as (see~\eqref{eq:fEuler})
\begin{align}
	\label{eq:so3fEuler}
	R_1 = \exp\paren[\big]{h \omega(R_0)} \mul R_0
	.
\end{align}
Recall that the exponential $\exp(\omega_0)R_0$ is defined as the solution of the differential equation $R' = \omega_0 R$ at time 1 with initial condition $R_0$.
This is consistent with the interpretation that one ``freezes'' the vector field $R\mapsto\omega(R) R$ at the point $R_0$ to obtain the vector field $R\mapsto\omega(R_0) R$, which is often much easier to solve.

\subsubsection{Integration on a sphere}

Consider now a differential equation on the two dimensional \emph{sphere} $S^2$: \begin{align}
	\label{eq:diffeqsphere}
	x' = f(x)
.
\end{align}
This means that $x\in S^2$, and that $f(x)$ is tangent to the sphere at $x$.
We would like to adapt the forward Euler method on rotations~\eqref{eq:so3fEuler} to solve the differential equation~\eqref{eq:diffeqsphere}.
This is done in the following way.
The vector field $f(x)$ at the point $x$ has to be interpreted as an \emph{infinitesimal rotation}, i.e., it is rewritten as \begin{align}
	\label{eq:sphereconsistency}
	f(x) = \omega(x) \times x
,
\end{align} where $\omega(x) \in \RR^3$.
Note that this is equivalent to considering $\omega$ as a map from the sphere to the space $\so[3]$ of $3\times 3$ skew-symmetric matrices, using the well known map
$v \mapsto \hat{v} \coloneqq \bracket{v\times \cdot}$, sometimes called the hat-map~\cite[\S\,9.2]{AbMaRa1988}.
% , where $\omega \times \cdot$ regarded as a skew-symmetric $3\times3$ matrix.
The integrator stemming from the forward Euler method becomes 
\begin{align}
	\label{eq:fEulersphere}
	x_1 = \exp\paren[\big]{h{\hat{\omega}(x_0)} } \mul x_0
	.
\end{align}

\subsubsection{Ambiguity due to isotropy}

There is, however, a typical ambiguity in choosing $\omega(x)$, as many infinitesimal rotations $\omega(x)$  coincide with $f(x)$ \emph{at the point $x$}.
In other words, the solution $\omega$ of the equation $\omega \times x = f$ is not unique, given a tangent vector $f$ at a point $x$ on the sphere.
The fundamental reason for this is that at any given point $y$ on the sphere, the rotations about $y$ leave the point $y$ invariant.
See also \autoref{fig:motionambiguity} for an illustration in a very similar setting.

\begin{figure}
\captionsetup[subfigure]{justification=centering}
	\centering
	\begin{subfigure}{.49\textwidth}
		\centering
			\begin{tikzpicture}[
		gpt/.style={fill=black},
		xpt/.style={fill=red},
		motion/.style={draw, thick, postaction={decorate}},
		decoration={
			markings,
		mark=at position 0.3 with {\arrow{stealth}},
		mark=at position 0.7 with {\arrow{stealth}},
		}
		]
		\coordinate (x0) at (0,-1);
		\coordinate (x1) at (0,1);
		\path[motion, ultra thick] (x0) -- (x1);
		\path[draw, dashed, color=DarkGray] (2,0) -- (-2,0);
	%%%%%
	\newcommand*\drawarcangle[1]{
		\draw[motion] (x0) arc[start angle=-#1, end angle=#1, radius={1/sin(#1)}] ;
		\draw[motion] (x0) arc[start angle={180+#1}, end angle={180-#1}, radius={1/sin(#1)}] ;
		\node[below] at (x0) {$x_0$};
		\node[above] at (x1) {$x_1$};
	}
	%%%%
	\drawarcangle{45}
	% \drawarcangle{60}
	\drawarcangle{90}
	\drawarcangle{120}

		\draw[xpt] (x0) circle(2pt);
		\draw[xpt] (x1) circle(2pt);

	\end{tikzpicture}
	\caption{Motion ambiguity
\label{sfig:groupmotions}}
\end{subfigure}
	\begin{subfigure}{.49\textwidth}
		\centering
				\begin{tikzpicture}[
			isochoice/.style={draw, thick,},
			rotated/.style={Green}
			]
		\coordinate (origin) at (0,0);
		\newcommand*\myangle{40}
		\newcommand*\myradius{2}
		\draw[gray, very thick, ->, >=stealth] (origin) -- +(0,1.5) node (tip) {};
		\node[above] at (tip) {$f(x)$};
		\draw[isochoice, ->, >=stealth] (origin) arc[start angle=180, end angle={180-\myangle}, radius=\myradius] node[right] {$\exp(\xi)\act x$};
		% \draw[isochoice] (origin) arc[start angle=0, end angle=\myangle, radius=\myradius];
		\draw[isochoice, rotated, ->, >=stealth] (origin) arc[start angle=0, end angle=-\myangle, radius=\myradius];
		\draw[isochoice, ->, >=stealth] (origin) arc[start angle=180, end angle={180+\myangle}, radius=\myradius];
		\draw[->, >=latex,  rotated] ({180-80}:.5cm) arc[start angle={180-80}, end angle={180+80}, radius=.5];
		\node[red] at (0,-1) {$\neq$};
		\draw[fill=black] (origin) circle(2pt);
		\node[right] at (origin) {$x$};
		\end{tikzpicture}
		\caption{Infinitesimal motion  ambiguity
\label{sfig:infinitesimal}}
	\end{subfigure}
	\caption[Motion Figure]{
		We illustrate the concept of a connection in the case of the Euclidean displacement group $\SO[2] \ltimes \RR^2$ acting on $\RR^2$.

		(\subref{sfig:groupmotions})
		There are plenty of displacements bringing the point $x_0$ to the point $x_1$.
		The ambiguity in the displacement choice is measured by the isotropy group $\SO[2]$, which, in this case, has dimension one.
		The displacements consist of either a translation, or arcs of circle with centre located on the perpendicular bisector of $x_0$ and $x_1$.
		The discovery of Nomizu~\cite{No54} is that there is one \emph{preferred} choice: the translations, marked in bold in the picture.
		The reason translations stand apart, is that they are geodesics associated to an invariant principal connection (see \autoref{sec:princconn}).
		In our framework, they are associated to an equivariant isotropy map, which we call a \emph{connection} (see \autoref{def:connection}).

		(\subref{sfig:infinitesimal})
		Suppose that a choice was made to use circular motions, say, on the right, which gives the point $\exp(\xi)\act x$ on the picture.
		As the isotropy choice is a linear map, the isotropy choice of the opposite of this vector commands to follow $\exp(-\xi)\act x$, i.e., to turn \emph{left}.
		On the other hand, rotating the initial vector by a half turn would command to turn \emph{right}.
		This choice discrepancy illustrates the lack of equivariance.
		In \autoref{prop:affineconnection}, we show rigorously that translations are the only equivariant choices for full affine spaces corresponding to $\GL \ltimes \RR^d$ acting on $\RR^d$.
	}
\label{fig:motionambiguity}
\end{figure}

Resolving that ambiguity amounts to choose a mapping $f \mapsto \omega$, which satisfies the consistency condition~\eqref{eq:sphereconsistency}.
We call such a mapping an \emph{isotropy map}.

Some of the questions we address in this paper are:
\begin{itemize}
	\item
How to choose the isotropy map?
\item
Are some choices better than others?
\item
How to \emph{classify} isotropy maps?
\end{itemize}

\subsubsection{Isotropy forms}

% To illustrate the point on the example on the sphere above, suppose that the isotropy map $f\mapsto \omega$ depends in fact only on the value of $f$ at each point $x$: we call such isotropy maps isotropy forms.
Let us now restrict the discussion on the particular case where the isotropy map $f\mapsto \omega$ depends in fact only on the value of $f$ at each point $x$; we identify such isotropy maps with a $\RR^3$-valued one-form on the sphere and call it an \emph{isotropy form}.
The vector $\omega(x)$ is now determined only from the knowledge of the \emph{vector} $f(x)$.

It is already fairly intuitive that there is a preferred choice, namely the vector $\omega(x)$ which is perpendicular to $x$ and $f(x)$.
A suitable interpretation is that the solution of the \emph{frozen} differential equation $y' = y \mapsto \omega(x) \times y$ starting at $x$ is then an invariant \emph{geodesic} on the sphere.
This reflects a general principle, which is that the isotropy map is actually \emph{equivariant} with respect to the action of the symmetry group $\SO[3]$ (see \autoref{fig:motionambiguity}).
Intuitively, it means in this case that the connection does not make a choice to go left or right, but rather straight ahead, hence the appearance of geodesics.
% Equivariant isotropy forms play a special role in this paper.
We show how to identify equivariant isotropy forms with invariant principal connection, and thus, to the existence of geodesics (see \autoref{sec:connections} and \autoref{sec:connexamples}).

\subsubsection{Lifting of vector fields}
One way to look at such an isotropy form is as a computational device to lift a vector field $f$ on the sphere to a vector field $\lifted[][\conn]$ on the group $\SO[3]$.
Let us first choose an arbitrary point on the sphere, say the north pole $N=(0,0,1)$.
We will call this point the \emph{origin} and  denote it by $\origin$ in the rest of the paper.
We now denote by $\SO[2]$ the subgroup of rotation group $\SO[3]$ which leaves the point $N$ invariant.
We will call this subgroup the \emph{isotropy subgroup}.
A point $x\in S^2$ on the sphere may now be identified with the set $R \mul \SO[2]$ of rotations, where $R$ is one of the rotations bringing $N$ to $x$, i.e., $R(N) = x$.
So this set consists of products $R\mul r$ of matrices $R$ with that property, and rotations $r$ which leave $N$ invariant.
One can check that this set is independent of the chosen rotation $R$ bringing $N$ to $x$.
The Lie group $\SO[3]$ can now be regarded as a fibred bundle over the sphere as illustrated on \autoref{fig:homogeneousspace}.
At a point $R\in\SO[3]$ such that $R(N) = x$, the isotropy form $\omega$ defines a lifted vector $\lifted[][\conn](R) \coloneqq \omega(x) \mul R$ which is tangent to $\SO[3]$ at $R$.
The fact that it is a lift means that the projections of the solutions of the equation $R' = \omega(x) R$ on $\Man$ are solutions of $x' = f(x)$.
It may be desirable that the integrator has the same property: we give a precise statement in \autoref{prop:lifting}.
Note however that one does not solve the lifted differential equation, but rather solves directly on the sphere.
Finally, we will mostly focus on \emph{equivariant} isotropy form, which ensures that the described lift is invariant.

% In general, as it is defined for instance in~\cite{IsMKNoZa00}, it could depend on the whole vector field, so it is really a map from $\diff[S^2]$ to the set of functions from $S^2$ to $\so[3])$.
% This is how we define isotropy maps in \autoref{sec:isotropymaps}.

\subsubsection{Skeletons}

Note that the version of forward Euler on the sphere is not obtained directly from the forward Euler integrator on $\SO[3]$.
Instead, there is an abstract definition of the forward Euler method which specializes to a concrete version forward Euler on $\SO[3]$ on the one hand, and on the sphere $S^2$ on the other hand.
To describe this mechanism, we introduce the notion of a \emph{skeleton} (see \autoref{sec:rungekutta}), which is, roughly speaking, a Lie group integrator stripped from its isotropy map.
For instance, the skeleton of the forward Euler method is as follows (the meaning of that graph is explained in details in \autoref{sec:rungekutta}):
\begin{align}
	\begin{tikzpicture}[stagetree, ineq]
		\node[vertex, initfi] (init) {$\initial$};
		\node[vertex, initfi, intermediate, right of=init] (final) {$\final$};
		\draw[orient, initfi] (init) -- (final) node[midway, auto]{$F_{\initial}$};
	\end{tikzpicture}
\end{align}

As $\SO[3]$ is a particular case of a homogeneous space, one recovers forward Euler on $\SO[3]$ as the forward Euler skeleton applied to a canonical isotropy map, which exists on any Lie group, called the Maurer-Cartan form (see \autoref{prop:lifting}).
% The relation between the integrator on the group $\symgrp$ and the integrator on the manifold $\Man$ is examined in 
% It is important to realise that the integrator~\eqref{eq:fEulersphere} is simpler than an integrator on the full $\SO[3]$ group.

One of the results of this paper is that the common skeletons, such as the aforementioned forward Euler, or Runge--Kutta--Munthe-Kaas, or Crouch--Grossman, all are equivariant (\autoref{prop:RKMKCGCFequi}).
We also prove that the combination of an equivariant skeleton with an equivariant isotropy map (not necessarily an isotropy form) yields an equivariant integrator (\autoref{prop:skelequi}).
We refer to~\cite{MLQu01,MKVe13,MLMoMKVe14} for discussions on the importance of affine equivariant integrators and their relations with standard Runge--Kutta methods and B-series.

% There is, however, an essential ambiguity in the isotropy map, as many infinitesimal motions correspond to a given vector field, as illustrated in \autoref{fig:motionambiguity}.
% The available literature is scarce when it comes to the classification of the possible isotropy maps.
We now come to the classification of the possible isotropy maps.

\subsection{Classification of isotropy maps}

% The purpose of this paper is to provide a classification of \emph{isotropy maps}, a crucial device in the conception of \emph{homogeneous space} integrators.
In this paper we identify five criteria to achieve a classification of isotropy maps: \emph{equivariance}, \emph{locality}, \emph{order}, \emph{flatness} and \emph{symmetry}.
We then recognise the prevalence of equivariant, local, order zero isotropy maps which we call \emph{connections} and study them in details.

Here are the classification criteria in descending order of generality:

% One key idea to address this issue is
\begin{description}
	\item[Equivariance]
The notion of \emph{equivariance}, which was already developed in~\cite{MKVe13} for integrators, is described as follows.
An isotropy map is a map from vector fields to functions from the manifold $\Man$ to the Lie algebra $\symalg$ (see \autoref{sec:isotropymaps}).
As the symmetry group $\symgrp$ acts naturally on both those spaces, an isotropy map is equivariant if it commutes with those actions.
% A homogeneous space has a built-in symmetry encoded by the group $\symgrp$.
% If the isotropy map preserves this symmetry, we will call it \emph{equivariant}.
% which is only preserved if the integrator at hand is equivariant.
% We will show in \autoref{prop:skelequi} that if the isotropy map is equivariant, then so is the corresponding integrator.
Enforcing equivariance already drastically limits the choices of isotropy maps.
We are not aware of the existence of any non-equivariant isotropy map, although we present some possible candidates in \autoref{sec:conclusion}.

\item[Locality]
Locality, also introduced in a similar guise in~\cite{MKVe13}, means that the isotropy map only depends on the value of the vector field and its derivatives at a given point.
In practice, most isotropy maps are local, but, crucially, the tautological isotropy map (see \autoref{sec:tautological}) is \emph{not} local.

\item[Order]
% The third criterion used to classify local, equivariant isotropy maps is their \emph{order}.
An equivariant and local isotropy map always has an order, which is the number of derivative of the vector field that the isotropy map depends on~\cite{Ve14}.
Isotropy maps of higher order are possible and have been considered in the literature.
In particular, in~\cite[Example~4]{MK99}, the author considers using the Jacobian in the isotropy map, so this is an isotropy map of order one.
In~\cite{LeOl02}, the authors consider higher order isotropy maps to obtain integrators on the sphere.
When the underlying manifold is an affine space, the integrators using higher order isotropy maps are closely related to \emph{exponential integrators} (see for instance~\cite{Mi05}).
As a detailed study of higher order isotropy maps would take us too far afield, we refer the reader instead to~\cite{Ve14}.
% Indeed, the simplest possible local isotropy maps are isotropy \emph{forms}, i.e., $\symalg$-values one-forms, which we regard as order zero isotropy maps.

\item[Flatness and Symmetry]

We make a further classification of connections (equivariant, local, order zero, isotropy maps) into \emph{flat} and \emph{symmetric} connections.
A flat connection witnesses the flatness of the ambient homogeneous space, so they occur only in special cases, such as when $\Man$ is an affine space (\autoref{sec:affine}), a principal homogeneous space (an example of which is in \autoref{sec:isoflat}) or a Lie group (\autoref{sec:liegroupact}).
Symmetric connections are, however, more widespread.
There are also a number of connections which are neither flat nor symmetric.
We refer to \autoref{sec:connexamples} and \autoref{tab:homogconn} for examples.
\end{description}

% In the remaining of the paper, we focus on the equivariant isotropy maps of order zero, which we call \emph{connections}, as they can be identified with Nomizu's invariant principal connections and reductive structures, see \autoref{fig:connections}.

% We then thoroughly examine existence, and concrete formulas, for connections on homogeneous spaces that have been used in numerical analysis, computer visions and optimisation.
% The reader will find a summary of the results in \autoref{tab:homogconn}.

% Besides the definitions of equivariance, locality, and order of an isotropy maps in \autoref{sec:isotropymaps}, and the definition of connections in \autoref{sec:connections}, 
The following results appearing in this paper are new:
\begin{itemize}
	\item
		Classification of isotropy map according to the properties of equivariance, locality and order (\autoref{sec:isotropymaps}, \autoref{sec:connections})
	\item
		Equivalence of connections (order zero equivariant isotropy maps), reductive structures and principal invariant connections in \autoref{sec:connections}
	\item 
		The formulas in \autoref{tab:homogconn}, when they existed in the literature, are in fact connections
	\item
The homogeneous space of fixed rank matrices has no connection (\autoref{prop:fixedranknoconn}).
\item
	New, all encompassing definition of the existing Lie group methods (\autoref{sec:rungekutta}),  showing their equivariance (\autoref{prop:skelequi}) and locality (\autoref{prop:skellocal}).
\item
	If the isotropy choice is equivariant, then so is the corresponding method (\autoref{prop:skelequi}).
\end{itemize}

\subsection{Definitions and notations}

We now recall some basic definitions about homogeneous spaces that will be needed throughout this paper.
The survey paper~\cite{IsMKNoZa00} contains an introduction to manifolds, Lie groups and homogeneous spaces which is targeted at numerical analysts.
The reader will also find in the numerical analysis literature such as~\cite{EdArSm98} and~\cite[\S\,IV.9]{HaLuWa06}, material on some particular examples of homogeneous spaces such as those we study in \autoref{sec:connexamples}.
We refer further to~\cite{Sh97,KoNo69,Ga14} for more thorough expositions of homogeneous spaces.
The reader will find a summary of the concepts and definitions of homogeneous spaces in \autoref{fig:homogeneousspace}.

\begin{figure}
	\centering
	\begin{tikzpicture}[
		scale=1.5,
		fibre/.style={draw, thick},
		helpnode/.style={shape=coordinate},
		liealg/.style={fill=LightGreen, nearly opaque, draw=black},
		proj/.style={dashed, draw},
		vector/.style={->, very thick, draw},
		subalg/.style={draw, thick, color=Green},
		]
		\newcommand*\opos{1}
		\newcommand*\xpos{3}
		\newcommand*\gxpos{4.5}
		\newcommand*\Gleft{0}
		\newcommand*\Gright{5}
		\newcommand*\Ghigh{3}
		\newcommand*\Glow{0}
		\newcommand*\Mmid{-1}

		\NewDocumentCommand\drawvertical{mO{draw}O{}}{
			\path[#2] (#1, \Ghigh) to[bend left=25] #3 (#1, \Glow);
		}
		\NewDocumentCommand\drawhorizontal{mO{draw}}{
			\path[#2] (\Gleft, #1) -- (\Gright, #1);
		}

		\newcommand*\drawpoint[1]{\fill (#1) circle[radius=1pt];}

		\drawhorizontal{\Ghigh}[draw]
		\drawhorizontal{\Glow}[draw]
		\drawhorizontal{\Mmid}[draw]

		\drawvertical{\Gleft}[draw]
		\drawvertical{\Gright}[draw]

		\drawvertical{\opos}[fibre][node[helpnode, pos=.5](e){}]
		\drawvertical{\xpos}[fibre][node[helpnode, pos=.3](gb){} node[helpnode, pos=.5](gbh){}]
		\drawvertical{\gxpos}[fibre][node[helpnode, pos=.4](ggb){} node[helpnode, pos=.6](ggbh){}]

		\newcommand*\liealgsizehalf{.9}
		\newcommand*\liealgsize{1.8}

		\drawpoint{e}
		\node[above left] at (e) {$e$};

		\path[liealg] ($(e) +(\liealgsizehalf, \liealgsizehalf)$) -- ++(0, -\liealgsize) -- ++(-\liealgsize, 0) -- ++(0, \liealgsize) -- cycle;
		\path[subalg] ($(e) +(0, \liealgsizehalf)$) -- +(0, -\liealgsize);
		\path[subalg] ($(e) +(-\liealgsizehalf, 0)$) -- +(\liealgsize, 0);
		
		\node[above left] at ($(e) +(\liealgsizehalf,0)$) {$\redalg$};
		\node[below right] at ($(e) + (0, \liealgsizehalf,0)$) {$\isoalg$};
		\node[below right] at ($(e) + (\liealgsizehalf, \liealgsizehalf,0)$) {$\symalg$};

		\coordinate (o) at (\opos, \Mmid);
		\coordinate (x) at (\xpos, \Mmid);
		\coordinate (gx) at (\gxpos, \Mmid);

		\path[proj] (\opos, \Glow) -- (o);
		\path[proj] (\xpos, \Glow) -- (x);
		\path[proj] (\gxpos, \Glow) -- (gx);

		\node[below] at (\opos, \Mmid) {$\origin$};
		\node[below] at (\xpos, \Mmid) {$x$};
		\node[below] at (\gxpos, \Mmid) {$g \act x$};

		\node[left] at (gb) {$\gb$};
		\node[left] at (gbh) {$\gb \mul h$};
		\node[left] at (ggb) {$g \mul \gb$};
		\node[left] at (ggbh) {$g\mul \gb \mul h$};
		
		\drawpoint{gb}
		\drawpoint{gbh}
		\drawpoint{ggb}
		\drawpoint{ggbh}

		\drawpoint{o}
		\drawpoint{x}
		\drawpoint{gx}

		\node[above] at (\opos, \Ghigh) {$\isogrp$};
		\node[above] at (\xpos, \Ghigh) {$\gb \mul \isogrp$};
		\node[above] at (\gxpos, \Ghigh) {$g \mul \gb \mul \isogrp$};

		\node[right] at (\Gright, \Ghigh) {$\symgrp$};
		\node[right] at (\Gright, \Mmid) {$\Man$};

		\newcommand*\veclength{.5}

		\path[vector] (gb) -- ++(\veclength,.2) node[right] {$\xi \mul \gb$};
		\path[vector] (x) -- +(\veclength,0) node[above] {$\xi \act x$};

	\end{tikzpicture}
	\caption[Homogeneous Space]{
		An illustration of a homogeneous space.
		This illustration's purpose is only to refresh knowledge on homogeneous spaces, the reader not familiar with homogeneous spaces is directed to~\cite{IsMKNoZa00} for an in-depth introduction homogeneous spaces relevant to numerical analysis.

		The Lie group $\symgrp$ acts transitively on the manifold $\Man$.
		The identity of $\symgrp$ is denoted by $e$.
		The tangent space of $\symgrp$ at $e$ is its Lie algebra $\symalg$.

		The origin $\origin\in\Man$ is an arbitrary point of $\Man$ which defines the corresponding isotropy group $\isogrp$.
		The subspace of $\symalg$ which is tangent to $\isogrp$ at $e$ is the Lie algebra $\isoalg$ of $\isogrp$.
		In some cases, as we discuss in \autoref{sec:reductive}, there exists a decomposition $\symalg = \isoalg \oplus \redalg$ such that $[\isoalg,\redalg] \subset \redalg$, which is fundamental to construct isotropy maps.

		Any point $x \in \Man$ may be identified with the set $\gb \mul \isogrp$, for some $\gb$ such that $x = [\gb] = \gb \mul \isogrp$.
		Multiplication on the right by an element $h\in\isogrp$ projects to the same point, i.e., $x = [\gb \mul h] = [\gb]$.
		Multiplication on the left by an element $g\in\symgrp$ sends fibres to fibres, and descends to $\Man$ to the action of $g$ to $\Man$, i.e., $[g \mul \gb] = g \act [\gb]$.
	}
\label{fig:homogeneousspace}
\end{figure}

We use the standard notations pertaining to calculus on smooth manifolds~\cite{AbMaRa1988}.
Given a manifold $\Man$, we denote its tangent bundle by $\Tan\Man$.
The space of sections of that tangent bundle (the vector fields) is denoted by $\diff$.
Given a function $f \colon \Man \to \mathcal{N}$, where $\Man$ and $\mathcal{N}$ are two smooth manifolds, we denote its push-forward, or Jacobian, by $\Tan f$, so $\Tan f \colon \Tan\Man \to \Tan \mathcal{N}$.

A \demph{homogeneous space} is a smooth manifold $\Man$, equipped with a transitive action of a Lie group $\symgrp$~\cite{KoNo69,Ki08,Sh97,Ga14}.
We denote this transitive action as
\begin{align}
g \act x
,
\qquad
g \in \symgrp
\quad
x\in\Man
.
\end{align}
Fixing an arbitray point $\origin\in\Man$, we define the \demph{isotropy group} at this point by
\begin{align}
	\isogrp \coloneqq \setc{g\in\symgrp}{g\act \origin = \origin}
	.
\end{align}
The manifold $\Man$ may be identified with cosets
\begin{align}
x \equiv g\mul \isogrp = \setc{g\mul h}{h \in \isogrp}
.
\end{align}
We denote the projection from $\symgrp$ to $\Man$ by
\begin{align}
[g] \coloneqq g\mul \isogrp
.
\end{align}
For a tangent vector $X$ at a point $g\in\symgrp$, we denote the projection by an abuse of notation
\begin{align}
[X] \coloneqq \Tan \pi \mul X
,
\end{align}
where $\pi$ denotes the projection $\pi(g) = [g]$.

Note that in order to simplify the notations, we will always work as if $\symgrp$ was a subgroup of $\GL$ for some integer $d$;
this is not a limitation because one can translate these notations into abstract manifold notations.
In particular, we will consider the \demph{Lie algebra} $\symalg$, the tangent space at the identity of $\symgrp$, to be an affine subspace of $\RR^{d\times d}$, the space of $d\times d$ matrices.
This allows us to define operations such as the adjoint action of $g \in \symgrp$ on $\xi \in \symalg$ by matrix multiplication as $\xi \to g \mul \xi \mul g\inv$.
Similarly, the multiplication $g \mul \xi$ denotes the tangent of the left multiplication map applied to $\xi\in\symalg$, and is thus considered as a vector at $g$, i.e., $g\mul \xi \in \Tan[g]\symgrp$.

Finally, we will repeatedly use that the space $\Tan[\origin]\Man$ is isomorphic to $\symalg/\isoalg$, which is straightforward to check~\cite[\S\,4.5]{Sh97}.

\section{Isotropy Maps}
\label{sec:isotropymaps}

% We define isotropy maps and give some definitions, especially the notion of equivariance.
An isotropy map, or choice of isotropy, essentially transforms a vector field $f$ into a set of frozen vector fields at every point on $\Man$.
We thus obtain the following definition.

\begin{definition}
We call an \demph{isotropy map} a linear map 
\begin{align}
	\isoc \in \Lin \paren[\big]{\diff , \Cinf(\Man, \symalg)}
\end{align}
which satisfies the \demph{consistency condition}
\begin{align}
\label{eq:isoconsistency}
	{\pairing{\isoc}{f}(x)}\act x = f(x) \qquad x \in \Man \quad f \in \diff
	.
\end{align}
\end{definition}

The interpretation is that, given a vector field $f$, the corresponding value $\nu = \pairing{\isoc}{f} \in \Cinf(\Man, \symalg)$ associates to every point $x\in\Man$ a \emph{frozen vector field} $F^x$ based at $x$ defined by the infinitesimal action
\begin{align}
	F^x(y) = \nu(x) \act y \qquad y \in \Man
	.
\end{align}

The {consistency condition}~\eqref{eq:isoconsistency} means that the frozen vector field for $f$ based at $x$, and evaluated at $x$, coincides with $f$ at $x$, i.e., $F^x(x) = f(x)$.

\subsection{Equivariance}

As $\symgrp$ naturally acts on $\Man$, it also acts on $\diff[\Man]$.
$\symgrp$ also acts on $\symalg$ by the adjoint action, and on $\Man$ by definition, so it acts on functions from $\Man$ to $\symalg$.
It is thus natural to ask for the equivariance of the isotropy map, which is defined as follows.

\begin{definition}
	An isotropy map is \demph{equivariant} if
	\begin{align}
		\pairing{\isoc}{g\act f} = g \act \pairing{\isoc}{f}
	\end{align}
	where the action of $g\in\symgrp$ on a vector field $f$ is  given by~\cite[\S\,2.4]{Ki08}
	\begin{align}
\label{eq:defactvf}
	(g\act f)(x) \coloneqq \Tan g \mul f(g\inv \act x)
	,
	\qquad
	x \in \Man
	,
	\end{align}
	the action on $\nu\in\Cinf(\Man, \symalg)$ is
	\begin{align}
\label{eq:defactfMG}
	(g \act \nu)(x) \coloneqq g\act \nu(g\inv \act x)
	,
	\qquad
	x\in\Man
	,
	\end{align}
	and the action on $\xi\in\symalg$ is the \emph{adjoint action}~\cite[\S\,2.6]{Ki08}
	\begin{align}
\label{eq:adjointaction}
	g \act \xi = g \mul \xi \mul g\inv
	.
	\end{align}
\end{definition}

Putting all the bits of the definitions together, equivariance of the isotropy map $\isoc$ is written as
\begin{align}
% (g \act \isoc)(x) 
\pairing{\isoc}{g \act f} (x)
= g \mul \pairing{\isoc}{f}(g\inv\act x) \mul g\inv
,
\qquad
x\in \Man
,
\quad
g \in \symgrp
.
\end{align}

\subsection{Locality}

The locality of an isotropy map is the idea that the value of $\pairing{\isoc}{f}$ at $x$ may only depend on $f$ in an infinitely small neighbourhood of $x$.
Local isotropy maps are studied in~\cite{Ve14}.
To obtain a rigorous, tractable definition, we have to first introduce the support of a section.
See also~\cite{MKVe13,MLMoMKVe14} for similar definitions.

Recall that the \emph{support} $\supp(\nu)$ of a section $\nu$ of a vector bundle is the closure of the set where the section is non-zero.
For an isotropy choice $\nu\in\Cinf(\Man, \symalg)$, this gives
\begin{align}
	\supp(\nu) \coloneqq \overline{\setc{x \in \Man}{\nu(x) \neq 0}}
	,
\end{align}
and for a vector field $f\in\diff$,
\begin{align}
	\supp(f) \coloneqq \overline{\setc{x \in \Man}{f(x) \neq 0}}
	.
\end{align}

\begin{definition}
	An isotropy map
	\(
	\isoc \in \Lin \paren[\big]{\diff , \Cinf(\Man, \symalg)}
	\)
	is \demph{local} if it is support non increasing, i.e.,
	\begin{align}
		\supp\paren[\big]{\pairing{\isoc}{f}} \subset \supp(f)
		\qquad
		f \in \diff
		.
	\end{align}
\end{definition}

\subsection{Fundamental Example: the tautological isotropy map}
\label{sec:tautological}

A fundamental example, on any manifold $\Man$, is given by the action of the \emph{whole group of diffeomorphisms} $\symgrp = \Diff$.
The action is transitive.
Note that the Lie algebra $\symalg$ is now 
\(
	\symalg = \diff
\)%
,
where $\diff$ is the space of vector fields on $\Man$.

We define the \demph{tautological isotropy map}
\footnote{
	With the notation~\eqref{eq:notationconstiso} that isotropy choice is simply
	\(
		\pairing{\isoc}{f} = f
	\)
	hence the name of \emph{tautological} isotropy map.
}
by
\begin{align}
	\label{eq:constantisomap}
	\pairing{\isoc}{f}(x) \coloneqq f 
	,
	\qquad
	\forall x \in\Man
	,
	\quad
	f \in \diff
	.
\end{align}

The tautological isotropy map is an  \emph{equivariant} but \emph{non local} isotropy map.

Indeed, $\pairing{\isoc}{f}$ is a \emph{constant function} from $\Man$ to $\symalg = \diff$.
The support of a constant function is either $\Man$, or, if the constant is zero, the empty set.
This shows that $\isoc$ is \emph{not local}, as the support of a non-zero vector field need not be the whole manifold $\Man$.
The simple intuition behind the non-locality of the tautological isotropy map $\isoc$ is that the value at a point $x$ is the whole vector field $f$ defined everywhere, as opposed to some quantity calculated at $x$.

The reason this isotropy map is fundamental is that, as we shall see in \autoref{sec:exactsol}, it is the isotropy map which, assuming zero-order of the underlying skeleton, gives the exact solution.

\section{Runge--Kutta Methods on Homogeneous Spaces}
\label{sec:rungekutta}

We  put the known Lie group and homogeneous space methods in a unified framework.
This allows us to show in \autoref{prop:skelequi} that if isotropy map is equivariant, then so is the corresponding integrator.

\subsection{Skeletons}

\begin{figure}
	\centering

\begin{tikzpicture}[%
		every node/.style={draw=black,thick,anchor=west},
		levela/.style={fill=Gainsboro},
		levelb/.style={fill=MintCream},
		levelc/.style={fill=OldLace},
		leveld/.style={fill=LemonChiffon},
  grow via three points={one child at (0.5,-0.7) and
  two children at (0.5,-0.7) and (0.5,-1.4)},
  edge from parent path={(\tikzparentnode.south) |- (\tikzchildnode.west)}]
  \node[levela] {Method $\Meth = \skel\circ\isoc$}
    child { node[levelb] {Isotropy map $\isoc$}}		
    child { node[levelb]  {Skeleton $\skel$}
      child { node[levelc] {Stage Tree $\Gamma$}}
      child { node[levelc] {Motions $\motion_{i,j}$}
	  	child {node[leveld] {Motion map $\motion$}}
		child {node[leveld] {Transition functions $\trans_{i,j}$}}
			}
		}
	;
\end{tikzpicture}
\caption{
	A summary of the relation between the constitutive elements of a method on a homogeneous space.
}
\label{fig:skeleton}
\end{figure}

We define a Runge--Kutta skeleton for a given Lie group $\symgrp$ from the following ingredients.
\begin{description}
	\item[Stage Tree $\Gamma$] Tree (i.e., a connected, undirected graph with no cycles), in which two vertices are singled out: the \demph{initial vertex}, denoted ``$\initial$'' and the \demph{final vertex} denoted ``$\final$''.
	\item[Motions \(\motion_{i,j}\)]  Maps \begin{align}\motion_{i,j}\colon \symalg^n \to \symgrp\end{align} defined for any two adjacent vertices $i$, $j$ in the graph, and the compatibility \begin{align}\motion_{i,j} = \motion_{j,i}\inv
			.
		\end{align}
\end{description}

One defines a skeleton
\begin{align}
	\skel \colon \Cinf(\Man,\symalg) \to \Diff
\end{align}
using the \emph{stages} $X_i \in \Man$, and the \emph{frozen vector fields} $F_i \in \symalg$ as intermediate variables.

From a skeleton and an isotropy map, we obtain an integrator on any homogeneous manifold $\Man$ for which $\symgrp$ is the symmetry group.
Indeed, given an isotropy choice $\nu \in \Cinf(\Man, \symalg)$, the map
\begin{align}
	\skel(\nu) \in \Diff
\end{align}
is  defined by
\begin{align}
x_1 = \skel(\nu)(x_0)
\end{align}
as follows.

\begin{mdframed}[leftmargin=2em,rightmargin=2em,skipabove=2em, skipbelow=2em,
	frametitle={Runge--Kutta Method}]
	\begin{subequations}
		\label{eqs:skeleton}
\begin{align}
X_{\initial} &= x_0\\
X_i &= \motion_{i,j}(F) \act X_j & \text{$\forall$ edge $i,j$}\\
F_i &= \nu(X_i) & \text{$\forall$ vertex $i$}\\
x_1 &= X_{\final}
\end{align}
\end{subequations}
\end{mdframed}

In the sequel we will always assume that $\nu$ is scaled (with a time step), so that the equations \eqref{eqs:skeleton} implicitly defining $x_{1}$ have exactly one solution.

Let us consider the simplest possible case, a stage tree containing only the initial vertex $\initial$ and final vertex $\final$:
\begin{align}
	\begin{tikzpicture}[stagetree, ineq]
		\node[vertex, initfi] (init) {$\initial$};
		\node[vertex, initfi, right of=init] (final) {$\final$};
		\draw[norient, initfi] (init) -- (final);
	\end{tikzpicture}
\end{align}
The corresponding equations for an isotropy choice $\nu = \pairing{\isoc}{f}$ corresponding to an isotropy map $\isoc$, are
\begin{subequations}
\begin{align}
X_{\final} &= \motion_{\final,\initial}(F_{\initial}, F_{\final}) \act X_{\initial} ,\\
F_{\initial} &=  \pairing{\isoc}{f}\paren{X_{\initial}},\\
F_{\final} &= \pairing{\isoc}{f}\paren{X_{\final}},
\end{align}
\end{subequations}
and the integrator maps the initial condition $X_{\initial}$ to the point $X_{\final}$.

Possible values for $\motion_{\final,\initial}(F_{\initial}, F_{\final})$ are $\exp(F_{\initial})$, which gives the forward Euler method, $\exp(F_{\final})$, which gives the backward Euler method, or $\exp\paren[\big]{(F_{\initial}+F_{\final})/2}$ which gives the trapezoidal rule.

\subsection{Transition Functions}

Note that most often, the motions $\motion_{i,j}$ are defined by
\begin{align}
	\motion_{i,j} \coloneqq \motion \circ \trans_{i,j}
\end{align}
with
\begin{description}
	\item[Motion map \(\motion\)] \begin{align}\motion\colon\symalg\to\symgrp,\end{align} which is usually either the exponential map, or an approximation of it, or the Cayley map when $\symgrp$ is quadratic; it should have the property that \begin{align}\motion(-\xi) = \motion(\xi)\inv
			,
		\end{align}
		which holds both for the exponential and Cayley map.
\item[Transition functions \(\trans_{i,j}\)] For any edge $i,j$, the transition function \begin{align}\trans_{i,j}\colon \symalg^n \to \symalg,\end{align} with compatibility 
		\begin{align}
			\label{eq:transcompat}
			\trans_{i,j}(F) = - \trans_{j,i}(F)
			.
		\end{align}
\end{description}

We will give the full expression of $\trans_{i,j}$ on the stage tree as
\begin{align}
	\begin{tikzpicture}[stagetree, ineq]
		\node[vertex] (init) {$j$};
		\node[vertex, right of=init] (final) {$i$};
		\draw[orient] (init) -- (final) node[midway, auto]{$\trans_{i,j}$};
	\end{tikzpicture}
\end{align}

For example, the \emph{forward Euler method} is given by
\begin{align}
	\label{eq:fEuler}
	x_1 = \exp\paren[\big]{\pairing{\isoc}{f}\paren{x_0}} \act x_0
\end{align}
so it corresponds to the transition function
\begin{align}
	\label{eq:transeeuler}
	\trans_{\final,\initial} = F_{\initial}
\end{align}
and it is equivalent to the tree
\begin{align}
	\begin{tikzpicture}[stagetree, ineq]
		\node[vertex, initfi] (init) {$\initial$};
		\node[vertex, initfi, intermediate, right of=init] (final) {$\final$};
		\draw[orient, initfi] (init) -- (final) node[midway, auto]{$F_{\initial}$};
	\end{tikzpicture}
\end{align}

Note that \eqref{eq:transcompat} means that the orientation is arbitrary, so with a different orientation, we obtain exactly \emph{the same method}, as $  \trans_{\initial,\final} = -\trans_{\final, \initial}$:
\begin{align}
	\begin{tikzpicture}[stagetree, ineq]
		\node[vertex, initfi, ] (init) {$\initial$};
		\node[vertex, initfi, intermediate, right of=init] (final) {$\final$};
		\draw[orient, initfi] (final) -- (init) node[midway, above]{$-F_{\initial}$};
	\end{tikzpicture}
\end{align}

We shade the vertices which value is not used in any of the transition functions.
In the case above, the value $F_{\final}$ is never needed, so the vertex $\final$ is shaded.

With these notations, the \emph{backward Euler method} is given by the transition function $\trans_{\final,\initial} = F_{\final}$, so we write
\begin{align}
	\begin{tikzpicture}[stagetree, ineq]
		\node[vertex, initfi, intermediate] (init) {$\initial$};
		\node[vertex, initfi, right of=init] (final) {$\final$};
		\draw[orient, initfi] (init) -- (final) node[midway, auto]{$F_{\final}$};
	\end{tikzpicture}
\end{align}

For the \emph{trapezoidal rule}, the transition function is $\trans_{\final,\initial} = (F_{\initial}+F_{\final})/2$, so it is written as:
\begin{align}
	\begin{tikzpicture}[stagetree]
		\node[vertex, initfi] (init) {$\initial$};
		\node[vertex, initfi, right of=init] {$\final$};
		\draw[orient, initfi] (init) -- (final) node[midway, auto]{$(F_{\initial} + F_{\final})/2$};
	\end{tikzpicture}
\end{align}

A slightly more involved example is the \emph{implicit midpoint rule}, which requires an extra stage which has the arbitrary label ``$\star$''.
The stage tree has thus three vertices, $\initial$, $\final$ and $\star$.
The rules for that method are to use one half of the frozen vector field at $\star$, and use it to go from $\initial$ to $\star$, as well as from $\star$ to $\final$.
The transition functions are thus
\begin{subequations}
\begin{align}
	\trans_{\star,\initial} &= F_{\star}/2\\
	\trans_{\final,\star} &= F_{\star}/2
\end{align}
\end{subequations}
so we write
\begin{align}
	\begin{tikzpicture}[stagetree]
		\node[vertex, initfi] (star) {$\star$};
		\node[vertex, initfi, intermediate, left of=star] (init) {$\initial$};
		\node[vertex, initfi, intermediate, right of=star] (final) {$\final$};
		\draw[orient, initfi] (init) -- (star) node[midway, above]{$F_{\star}/2$};
		\draw[orient, initfi] (star) -- (final) node[midway, auto]{$F_{\star}/2$};
	\end{tikzpicture}
\end{align}
Note again that the values $F_{\initial}$ and $F_{\final}$ are not used, so we shade the corresponding vertices $\initial$ and $\final$.
We also emphasise the (unique) path between the initial and final vertices.

For the sake of completeness, we also give the corresponding equations, which are
\begin{subequations}
\begin{align}
X_{\star} &= \exp(F_{\star}/2)\act X_{\initial} \\
X_{\final} &= \exp(F_{\star}/2)\act X_{\star} 
\end{align}
\end{subequations}
and
\begin{align}
F_{\star} &= \pairing{\isoc}{f}(X_{\star}) 
\end{align}
According to our convention, the integrator maps $X_{\initial}$ to $X_{\final}$.
Note that $X_{\initial}$, $X_{\final}$ and $X_{\star}$ are points in $\Man$, and that $F_{\star}\in\symalg$.

\subsection{Examples}
\label{sec:skeletonexamples}

We give examples of skeletons for some standard methods such as the standard Runge--Kutta methods, RKMK, Crouch--Grossmann and commutator free.

\subsubsection{Detailed example}

We start with a detailed example,
 the fourth order commutator-free method \cite{CeMaOw03}.
It is defined by the following transition functions.

\begin{subequations}
\begin{align}
	\trans_{1,\initial} &= \frac{1}{2}F_{\initial}
	\\
	\trans_{2,\initial} &= \frac{1}{2}F_1
	\\
	\trans_{3,1} &= -\frac{1}{2} F_{\initial} + F_2
	\\
	\trans_{4',\initial} &= \frac{1}{12}(3F_{\initial} + 2(F_1+F_2) - F_3) %\frac{1}{4}F_{\initial} + \frac{1}{6}(F_1+F_2) - \frac{1}{12}F_3 
	\\
	\trans_{\final,4'} &= \frac{1}{12}(-F_{\initial} + 2(F_1+F_2) + 3F_3) %\frac{1}{4}F_3 + \frac{1}{6}(F_1 + F_2) - \frac{1}{12}F_{\initial} 
\end{align}
\end{subequations}

It means that we have the vertices $\initial$, $1$, $2$, $3$, $4'$, and $\final$.
Each equation gives an edge, so there is an edge between $\initial$ and $1$, between $\initial$ and $2$, etc.
The stage tree thus takes the following form
\begin{align}
	\begin{tikzpicture}[
		stagetree,
		]
		\node[vertex] (3) {$3$};
		\node[vertex] (1) [right of=3] {$1$};
		\node[vertex, initfi] (initial) [right of=1] {$\initial$};
		\node[vertex, intermediate, initfi] (4p) [right of=initial] {$4'$};
		\node[vertex, initfi, intermediate] (final) [right of=4p] {$\final$};
		\node[vertex] (2) [above of=initial] {$2$};
		\draw[orient] (initial) -- (1) node[midway, above]{$F_{\initial}/2$};
		\draw[orient] (1) -- (3) node[midway, above]{$F_2-F_{\initial}/2$};
		\draw[orient] (initial) -- (2) node[midway, auto]{$F_1/2$};
		\draw[orient, initfi] (initial) -- (4p) node[midway,auto]{$\trans_{4',\initial}$};
		\draw[orient, initfi] (4p) -- (final) node[midway,auto]{$\trans_{\final, 4'}$};
	\end{tikzpicture}
\end{align}
As mentioned earlier, we also indicate the intermediate vertices, i.e., the vertices for which the component in the $F$ vector is not computed.
For instance in the current example, the values $F_{4'}$ and $F_{\final}$ are never needed.
Note that the vertex numbering is in general arbitrary, but in this case it indicates the order of the equation for which the method is explicit.

For completeness, we give the complete set of equation for the final method.
Recall that each edge gives rise to an equation following the rule \eqref{eqs:skeleton}.
First one has to choose an isotropy map $\nu$, which can for instance be defined as in \eqref{eq:defisoconn} from one of the connections in \autoref{tab:homogconn}.
In order to obtain a point $x_1$ from an initial condition $x_0$, we have to solve the following equations, with $\nu = \pairing{\isoc}{f}$.
\begin{subequations}
	\begin{align}
	x_0 &= X_{\initial} & F_{\initial} = \nu(X_{\initial}) \\
	X_{1} &= \exp\paren[\big]{F_{\initial}/2} \act X_{\initial} & F_1 = \nu(X_1) \\
	X_{2} &= \exp\paren[\big]{F_1/2} \act X_{\initial} & F_2 = \nu(X_2) \\
	X_{3} &= \exp\paren[\big]{F_2 - F_{\initial}/2} \act X_{1} & F_3 = \nu(X_3) \\
	X_{4'} &= \exp\paren[\Big]{\frac{1}{4}F_3 + \frac{1}{6}(F_1 + F_2) - \frac{1}{12}F_{\initial} } \act X_{\initial} \\
	X_{\final} &= \exp\paren[\Big]{\frac{1}{4}F_{\initial} + \frac{1}{6}(F_1+F_2) - \frac{1}{12}F_3 } \act X_{4'} \\
	x_1 &= X_{\final}
	\end{align}
\end{subequations}

\subsubsection{Runge--Kutta methods in $\RR^d$}
A regular Runge--Kutta method on $\RR^d$ with Butcher tableau $A$, $b$ \cite[\S\,II.1.1]{HaLuWa06} has the following skeleton
\begin{itemize}
	\item The motion map is $\Psi = \exp$
	\item The transition functions are
            \begin{subequations}
		\begin{align}
			\trans_{i,\initial} &\coloneqq \sum_{j=1}^{s}a_{i,j} F_j \qquad 1 \leq i \leq s\\
			\trans_{\final,\initial} &\coloneqq \sum_{j=1}^{s} b_j F_j
		\end{align}
            \end{subequations}
\end{itemize}

\subsubsection{RKMK3}
A third order RKMK method can be encoded as follows:
\begin{align}
	\trans_{1,\initial} &= \frac{1}{2}F_{\initial}\\
	\trans_{2,\initial} &= -F_0 + 2F_1\\
	\trans_{\final,\initial} &= \frac{1}{6}(F_0 + 4F_1+F_2) + \frac{1}{36}[4F_1+F_2,F_{\initial}] \\
\end{align}

The corresponding stage tree is thus:
\begin{equation}
\begin{tikzpicture}[stagetree, ineq]
	\node[vertex,  initfi] (init) {$\initial$};
	\node[vertex, left of=init] (2) {$2$};
	\node[vertex, initfi, intermediate, right of=init] (final) {$\final$};
	\node[vertex, above of=init] (1) {$1$};
	\draw[orient, initfi] (init) -- (final) node[midway, above]{$\trans_{\final,\initial}$};
	\draw[orient] (init) -- (1) node[midway, auto]{$F_{\initial}/2$};
	\draw[orient] (init) -- (2) node[midway, auto] {$-F_{\initial} + 2 F_{1}$};
\end{tikzpicture}
\end{equation}

\subsubsection{RKMK4}
A fourth order RKMK method can be encoded as follows:
\begin{subequations}
\begin{align}
	\trans_{1,\initial} &= \frac{1}{2}F_{\initial}\\
	\trans_{2,\initial} &= \frac{1}{2}F_1 - \frac{1}{8}\bracket{F_{\initial},F_1}\\
	\trans_{3,\initial} &= F_2 \\
	\trans_{\final,\initial} &= \frac{1}{6}\paren[\big]{F_{\initial} + 2(F_1+F_2) +F_3} - \frac{1}{12} \bracket{F_0, F_3}
\end{align}
\end{subequations}

We give the corresponding stage tree for completeness:

\begin{equation}
\begin{tikzpicture}[stagetree, ineq]
	\node[vertex,  initfi] (init) {$\initial$};
	\node[vertex, left of=init] (2) {$2$};
	\node[vertex, initfi, intermediate, right of=init] (final) {$\final$};
	\node[vertex, above of=init] (1) {$1$};
	\node[vertex, below of=init] (3) {$3$};
	\draw[orient, initfi] (init) -- (final) node[midway, above]{$\trans_{\final,\initial}$};
	\draw[orient] (init) -- (1) node[midway, auto]{$F_{\initial}/2$};
	\draw[orient] (init) -- (2) node[midway, auto] {$\frac{F_1}{2} - \frac{[F_{\initial},F_1]}{8}$};
	\draw[orient] (init) -- (3) node[midway, auto] {$F_2$};
\end{tikzpicture}
\end{equation}

\subsubsection{CG3}
The third order Crouch--Grossman method \cite[\S,IV.8.1]{HaLuWa06} is given by the following transition functions:
\begin{subequations}
\begin{align}
	\trans_{1,\initial} = \frac{3}{4}F_{\initial}\\
	\trans_{2,2'} = \frac{17}{208}F_1 \qquad
	\trans_{2',\initial} = \frac{119}{216}F_{\initial} \\
	\trans_{\final,3'} = \frac{24}{17}F_2 \qquad
	\trans_{3',3''} = -\frac{2}{3}F_1\qquad
	\trans_{3'',\initial} = \frac{13}{51}F_{\initial}
\end{align}
\end{subequations}

\subsubsection{Symmetric Gauss of order four}
The symmetric Gauss method \cite{ZaEnMK01} can also be encoded in this way.
\newcommand*\bF{\overline{F}}

\begin{align}
\begin{tikzpicture}[stagetree, ineq]
	\node[vertex, intermediate, initfi] (intermediate) {};
	\node[vertex, initfi, intermediate, left of=intermediate] (init) {$\initial$};
	\node[vertex, initfi, intermediate, right of=intermediate] (final) {$\final$};
	\node[vertex, above of=intermediate] (plus) {$+$};
	\node[vertex, below of=intermediate] (minus) {$-$};
	\draw[orient, initfi] (init) -- (intermediate) node[midway, above]{$(\bF_+ + \bF_-)/4$};
	\draw[orient, initfi] (intermediate) -- (final) node[midway, above]{$(\bF_+ + \bF_-)/4$};
	\draw[orient] (minus) -- (intermediate) node[midway, auto] {$-\sqrt{3}/6 \bF_+$};
	\draw[orient] (plus) -- (intermediate) node[midway, auto] {$\sqrt{3}/6 \bF_-$};
\end{tikzpicture}
\end{align}

The auxiliary variables $\bF_{+}$ and $\bF_{-}$ are determined implicitly by the equation
\begin{subequations}
\begin{align}
	\bF_{+} &= F_{+} +\frac{\sqrt{3}}{4} \bracket{\bF_{-},F_{+}}\\
	\bF_{-} &= F_{-} - \frac{\sqrt{3}}{4} \bracket{\bF_{+}, F_{-}}
\end{align}
\end{subequations}

\subsection{Order Zero and Exact Solution}
\label{sec:exactsol}

For a given $\xi \in \symalg$, we define the \demph{constant isotropy choice} $\nu\in\Cinf(\Man,\symalg)$ as \begin{align}
	\nu \colon x \mapsto \xi
	,
	\qquad
	x \in \Man
	.
\end{align}
We use the abuse of notation 
\begin{align}
	\label{eq:notationconstiso}
\nu = \xi
\end{align}
 in this case.

\begin{definition}
	We say that a skeleton $\skel$ has \demph{order zero} if it gives the exact solution for constant isotropy choices, i.e., using the notation \eqref{eq:notationconstiso},
	\begin{align}
		\skel(\xi) = \exp(\xi) \qquad \xi \in \symalg
		.
	\end{align}
\end{definition}

All the skeletons in \autoref{sec:skeletonexamples} have  order zero, so one obtains the exact solution with the motion map $\exp$,  and using the tautological isotropy map \eqref{eq:constantisomap} in any of those skeletons.

\subsection{Equivariance of Skeletons}

The following result shows that in practice it suffices to check the equivariance of the motion map $\motion$ and the transition functions $\trans_{i,j}$ to obtain an equivariant skeleton, and thus an equivariant method when used with an equivariant isotropy map.

\begin{proposition}
	\label{prop:skelequi}
	We have the following ``trickling down'' results on equivariance:
	\begin{enumerate}[label=\upshape(\roman*)]
		\item if $\motion$ and $\trans_{i,j}$ are equivariant, then so are $\motion_{i,j} = \motion \circ \trans_{i,j}$
		\item 
			\label{it:equiskel}
			if $\motion_{i,j}$ are equivariant, then so is the skeleton $\skel$
		\item if the skeleton $\skel$ and the isotropy map $\isoc$ are equivariant, then so is the method $\Meth = \skel \circ \isoc$.
	\end{enumerate}
\end{proposition}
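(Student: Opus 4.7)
The plan is to prove the three claims in the order (i), (ii), (iii). The first and third are essentially one-line compositions; the substantive work is in (ii), where I need to transport the defining equations \eqref{eqs:skeleton} under the $\symgrp$-action and invoke the uniqueness assumption stated right after them.

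For (i), I would simply unfold $\motion_{i,j} = \motion \circ \trans_{i,j}$ and apply the two hypothesised equivariances in succession, yielding
\begin{align}
    \motion_{i,j}(g\act F) = \motion(\trans_{i,j}(g\act F)) = \motion(g\act \trans_{i,j}(F)) = g \act \motion(\trans_{i,j}(F)) = g \act \motion_{i,j}(F).
\end{align}

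For (ii), fix $x_0 \in \Man$, $\nu \in \Cinf(\Man, \symalg)$, and let $(X_i, F_i)$ denote the unique solution of \eqref{eqs:skeleton}. The strategy is to verify that the $g$-transported tuple $(g\act X_i,\, g\act F_i)$ solves the same system with data $(g\act x_0, g\act\nu)$, one family of equations at a time. The initial equation $g\act X_\initial = g\act x_0$ is trivial. The frozen-field equation becomes $(g\act \nu)(g\act X_i) = g\act \nu(g\inv \act g \act X_i) = g\act \nu(X_i) = g\act F_i$, using directly the definition \eqref{eq:defactfMG} of the action on $\Cinf(\Man,\symalg)$. For the motion equation, I would use the equivariance of $\motion_{i,j}$ together with the compatibility identity $(g h g\inv)\act(g\act x) = g\act(h\act x)$, which is simply the associativity of the $\symgrp$-action on $\Man$; this gives
\begin{align}
    \motion_{i,j}(g\act F)\act(g\act X_j) = (g\mul\motion_{i,j}(F)\mul g\inv)\act(g\act X_j) = g\act(\motion_{i,j}(F)\act X_j) = g\act X_i.
\end{align}
By the uniqueness assumption below \eqref{eqs:skeleton}, the transported tuple is the unique solution of the system with data $(g\act x_0,\,g\act\nu)$, whence $\skel(g\act\nu)(g\act x_0) = g\act X_\final = g\act\skel(\nu)(x_0)$, which, after unpacking the action on $\Diff$ by conjugation, is exactly the equivariance of $\skel$.

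For (iii), I would just chain the equivariances: $\Meth(g\act f) = \skel(\pairing{\isoc}{g\act f}) = \skel(g\act \pairing{\isoc}{f}) = g\act \skel(\pairing{\isoc}{f}) = g\act\Meth(f)$, using equivariance of $\isoc$ and then part \ref{it:equiskel}. The only real obstacle is bookkeeping: one must track the five different actions in play (adjoint on $\symalg$, conjugation on $\symgrp$, the given action on $\Man$, the pushforward on $\diff$, and the pointwise action on $\Cinf(\Man,\symalg)$) and check that they are threaded together consistently; the conceptual content reduces to the associativity identity above.
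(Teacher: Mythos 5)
Your proposal is correct and follows essentially the same route as the paper: the paper also reduces everything to part (ii) by declaring (i) and (iii) clear, and proves (ii) by exactly your transport computation $g\act X_i = \motion_{i,j}(g\act F)\act(g\act X_j)$ via the associativity/conjugation identity. Your version is merely more explicit in checking the frozen-field equation and in invoking the uniqueness assumption to identify the transported tuple as \emph{the} solution.
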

\begin{proof}
	We give a proof of \autoref{it:equiskel}, the other two statement being clear.
	We simply show that if $x_1 = \skel(\nu)(x_0)$ is a solution with the stages $X_i$ and frozen vector fields $F_i$, then we have $g\act x_1 = \skel(g \act \nu)(g\act x_0)$.
	\begin{subequations}
\begin{align}
g \act X_i &= g\act(\motion_{i,j}(F) \act X_j) \\
&= g \act(\motion_{i,j}(F) \mul g\inv \mul g\act X_j)\\
&= \motion_{i,j}(g\act F)\act g\act X_j 
\end{align}
\end{subequations}
\end{proof}

Note that in all practical examples, the motion maps are of the form $\motion_{i,j} = \motion \circ \trans_{i,j}$.
The motion map $\motion$ is the exponential, or Cayley map, both of which are equivariant.
The transition functions $\trans_{i,j}$ are Lie-algebra morphisms, as they are linear combinations of commutators.

This gives the following result:
\begin{proposition}
	\label{prop:RKMKCGCFequi}
	All the RKMK, Crouch--Grossman and commutator-free skeletons are equivariant.
\end{proposition}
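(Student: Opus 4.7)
The plan is to reduce the statement to two equivariance checks via \autoref{prop:skelequi}, namely the equivariance of the motion map $\motion$ and the equivariance of each transition function $\trans_{i,j}$ appearing in the three families. By part (i) of that proposition, if both ingredients are equivariant then so are the composed motions $\motion_{i,j} = \motion \circ \trans_{i,j}$; by part (ii), equivariance of all $\motion_{i,j}$ propagates to the whole skeleton $\skel$, which is the claim.

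First I would dispatch the motion map. In all the examples of \autoref{sec:skeletonexamples}, $\motion$ is either the matrix exponential or the Cayley map, and both are manifestly compatible with the adjoint action: for the exponential, $\exp(g \mul \xi \mul g\inv) = g \mul \exp(\xi) \mul g\inv$ follows from the power series (or from uniqueness of the defining ODE), and the same conjugation-invariance holds for the Cayley map, which is a rational function of its matrix argument and commutes with conjugation term by term. In our terminology, both maps are equivariant from $(\symalg, \mathrm{Ad})$ to $(\symgrp,$ conjugation$)$.

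Next I would handle the transition functions. Inspecting each example in \autoref{sec:skeletonexamples}, every $\trans_{i,j}$ is a fixed $\RR$-linear combination of the stage values $F_k \in \symalg$ together with nested commutators of such $F_k$ (the RKMK3, RKMK4, and symmetric Gauss entries being the ones containing brackets; CG and commutator-free methods being purely linear). Since the adjoint action is linear and a Lie algebra automorphism, i.e.\ $g\act(\alpha F + \beta G) = \alpha\, g \act F + \beta \, g\act G$ and $g\act [F,G] = [g\act F, g\act G]$, any expression built from the $F_k$ using these two operations satisfies $\trans_{i,j}(g\act F) = g\act \trans_{i,j}(F)$. Applying this observation to each transition function listed for RKMK, Crouch--Grossman, and commutator-free methods finishes the proof.

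The only place where one might worry is whether I have really exhausted the structure of the transition functions of the three families in general, not just the low-order representatives displayed. For RKMK this is fine because the higher-order correction terms are by construction polynomials in the $F_k$ built from linear combinations and Lie brackets (arising from the inverse of $\dd\exp$); for Crouch--Grossman the transition functions are always linear combinations of the $F_k$; and for commutator-free schemes they are, by definition, also linear combinations of the $F_k$. So no commutator-free step introduces a non-equivariant operation, and this is the sense in which the three families fit uniformly into the reduction above.
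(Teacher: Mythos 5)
Your proposal is correct and follows essentially the same route as the paper: the paper likewise deduces the result from \autoref{prop:skelequi}, noting that the motion map is the exponential or Cayley map (both equivariant) and that the transition functions are linear combinations of stage values and their commutators, hence commute with the adjoint action. Your write-up merely makes explicit the conjugation-invariance computations that the paper leaves implicit.
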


\subsection{Locality}

We show the relation between locality of the skeleton and of the isotropy map.
First, we give a practical way of checking that a skeleton is local.

\begin{definition}
	We say that the motion map $\motion$ is local if
	\begin{align}
		\motion(0) = \Id
	\end{align}
	We say that the motion maps $\motion_{i,j}$ are local if
	\begin{align}
		\motion_{i,j}(0) = \Id
	\end{align}
	We say that the transition functions $\trans_{i,j}$ are local if
	\begin{align}
		\trans_{i,j}(0) = 0
	\end{align}
\end{definition}

\begin{proposition}
	\label{prop:skellocal}
	We have the following ``trickle down'' result on locality.
	\begin{enumerate}[label=\upshape(\roman*)]
		\item if the motion map $\motion$ and the transition functions $\trans_{i,j}$ are local, then so are the movement maps $\motion_{i,j} = \motion \circ \trans_{i,j}$
		\item if the motion maps $\motion_{i,j}$ are local, then so is the correspondent skeleton $\skel$.
		\item if the skeleton $\skel$ and the isotropy map $\isoc$ are local, then so is the method $\Meth = \skel \circ \isoc$.
	\end{enumerate}
\end{proposition}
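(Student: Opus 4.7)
The plan is to dispatch each of the three items in order, since they cascade through the tree of constituents sketched in \autoref{fig:skeleton}. For items (ii) and (iii) the relevant notion of ``locality'' for a skeleton $\skel$ or a full method $\Meth$ is the natural one compatible with the definition for isotropy maps: namely, $\skel$ (resp.\ $\Meth$) is local if $\skel(\nu)(x_0) = x_0$ for every $x_0 \notin \supp(\nu)$ (resp.\ $\Meth(x_0) = x_0$ for every $x_0 \notin \supp(f)$).

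For (i), I would simply compute
\begin{align}
  \motion_{i,j}(0) = \motion\paren[\big]{\trans_{i,j}(0)} = \motion(0) = \Id,
\end{align}
using locality of $\trans_{i,j}$ and then of $\motion$.

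For (ii), I would argue by exhibiting the unique trivial solution of the skeleton equations \eqref{eqs:skeleton}. Suppose $x_0 \notin \supp(\nu)$; by definition of the support there is an open neighbourhood $U$ of $x_0$ on which $\nu$ vanishes identically. I claim the assignment $X_i \coloneqq x_0$ for every vertex $i$ and $F_i \coloneqq 0$ for every vertex $i$ satisfies the skeleton equations. Indeed, $F_i = \nu(X_i) = \nu(x_0) = 0$ since $x_0 \in U$; and for any edge $i,j$ we have
\begin{align}
  \motion_{i,j}(F) \act X_j = \motion_{i,j}(0) \act x_0 = \Id \act x_0 = x_0 = X_i,
\end{align}
where the first equality uses the locality hypothesis on $\motion_{i,j}$. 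By the uniqueness assumption on solutions of \eqref{eqs:skeleton}, this \emph{is} the solution, hence $\skel(\nu)(x_0) = X_{\final} = x_0$.

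For (iii), the argument is a straightforward chase through supports: if $x_0 \notin \supp(f)$, then locality of $\isoc$ gives $\supp\paren{\pairing{\isoc}{f}} \subset \supp(f)$, so $x_0 \notin \supp\paren{\pairing{\isoc}{f}}$, and then locality of $\skel$ yields
\begin{align}
  \Meth(x_0) = \skel\paren[\big]{\pairing{\isoc}{f}}(x_0) = x_0.
\end{align}

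The only genuinely delicate point is step (ii), where one has to be careful that the trivial assignment is actually admissible: this relies critically on $\nu$ vanishing on an open neighbourhood (not merely at the single point $x_0$), so that all stages, which a priori could leave $x_0$, are forced to stay at $x_0$ as soon as all frozen vector fields vanish. The uniqueness of the solution of the implicit system \eqref{eqs:skeleton}, already invoked in the definition of a skeleton, then upgrades this ``trivial solution exists'' statement to the desired ``this is the solution''.
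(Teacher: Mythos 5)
Your proof is correct and follows essentially the same route as the paper: the key step in both is the observation that setting all stages $X_i = x_0$ and all frozen vector fields $F_i = 0$ solves the skeleton equations \eqref{eqs:skeleton}, which the uniqueness assumption then upgrades to \emph{the} solution, giving $x_1 = x_0$. The only (harmless) difference is organisational --- the paper places this argument inside item (iii) and dismisses (i)--(ii) as immediate, whereas you place it in (ii) and reduce (iii) to a support chase; note also that your closing caveat is slightly over-cautious, since the stage argument only needs $\nu(x_0)=0$ rather than vanishing on a whole neighbourhood, because locality of the $\motion_{i,j}$ pins every stage exactly at $x_0$.
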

\begin{proof}
	The only non trivial statement is the last one.
	Suppose that $f(x) = 0$ in a neighbourhood of the initial condition $x_0$.
        By definition of locality, we then have $\pairing{\isoc}{f}(x_0) = 0$.
	It is then easy to check that taking all the stages $X_i$ equal to $x_0$ provides a solution, and in particular, we obtain $x_1 = x_0$, which finishes the proof.
\end{proof}

As we already noticed in \autoref{sec:tautological}, the tautological isotropy map is \emph{not} local.
We saw however in \autoref{sec:exactsol} that the corresponding ``method'' is the exact solution
As the exact solution is a local method, the locality of the isotropy map $\isoc$ is \emph{not} necessary.

\subsection{Relation between Lie group integrators and homogeneous space integrators}

An isotropy map $\isoc$ allows us to \emph{lift} a vector field $f\in\diff$  
to a {lifted vector field} in $\diff[\symgrp]$ in two ways:
\begin{align}
	\label{eq:deflifted}
	\lifted(g) \coloneqq \pairing{\isoc}{f}\paren{[g]} \mul g \qquad \lifted[-][\isoc](g) \coloneqq - g \mul \pairing{\isoc}{f}([g])
	.
\end{align}

One can use the lifting property of the isotropy map to obtain an integrator in the group $\symgrp$ instead.
We now show that this integrator \emph{descends} to the integrator of the same type on $\Man$.

\begin{proposition}
	\label{prop:lifting}
	Assume an equivariant skeleton $\skel$ is defined over a group $\symgrp$.
	Then the integrator $\skel(\conn_{\pm})(\lifted[\pm])$ (where $\conn_{\pm}$ is one of the Maurer--Cartan forms defined in \autoref{sec:liegroupact}) descends to the integrator $\skel(\isoc)(f)$ on the homogeneous manifold $\Man$, i.e., the following diagram commutes:
	\begin{align}
\begin{tikzpicture}[ineq]
\matrix(m) [commdiag, column sep=5em, row sep=4em]
{\symgrp \& \symgrp \\
	\Man \&  \Man \\};
\path[->]
(m-1-1) edge node[auto] {$\skel(\conn_{\pm})(\lifted[\pm])$} (m-1-2)
(m-1-1) edge (m-2-1)
(m-1-2) edge (m-2-2)
(m-2-1) edge node[auto] {$\skel(\isoc)(f)$} (m-2-2)
;
\end{tikzpicture}
	\end{align}
\end{proposition}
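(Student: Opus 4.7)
The plan is to unwind the definitions and observe that applying the Maurer--Cartan isotropy $\conn_{\pm}$ to the lifted vector field $\lifted[\pm]$ at a point $g\in\symgrp$ returns precisely $\pairing{\isoc}{f}([g])$, so the frozen elements of $\symalg$ driving the skeleton on $\symgrp$ coincide with those driving the skeleton on $\Man$ once we project the stages via $\pi\colon g\mapsto [g]$.

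Concretely, first I would recall that the Maurer--Cartan form $\conn_{+}$ is defined (as in \autoref{sec:liegroupact}) so that $\pairing{\conn_{+}}{X}(g) = X(g)\mul g\inv$ for $X\in\diff[\symgrp]$, and analogously $\pairing{\conn_{-}}{X}(g) = g\inv \mul X(g)$. Plugging in the lift $\lifted(g) = \pairing{\isoc}{f}([g])\mul g$ from \eqref{eq:deflifted} gives
\begin{align}
\pairing{\conn_{+}}{\lifted}(g) = \pairing{\isoc}{f}([g]),
\end{align}
and the analogous computation with $\conn_{-}$ and $\lifted[-][\isoc]$ yields the same equality. In other words, the isotropy choice $\nu^{\symgrp} \coloneqq \pairing{\conn_{\pm}}{\lifted[\pm]}\in\Cinf(\symgrp,\symalg)$ is just the pullback along $\pi$ of the isotropy choice $\nu \coloneqq \pairing{\isoc}{f}\in\Cinf(\Man,\symalg)$.

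Next, I would run the skeleton equations \eqref{eqs:skeleton} on $\symgrp$: write $G_{\initial} = g_0$, with stages $G_i\in\symgrp$ and frozen vector fields $F_i^{\symgrp}\in\symalg$ satisfying
\begin{align}
G_i = \motion_{i,j}(F^{\symgrp})\mul G_j, \qquad F_i^{\symgrp} = \nu^{\symgrp}(G_i) = \nu([G_i]).
\end{align}
Apply $\pi$ to both sides of the edge relation: since $\pi$ intertwines left multiplication by $\motion_{i,j}(F^{\symgrp})\in\symgrp$ with the action on $\Man$, the stages $X_i \coloneqq [G_i]$ satisfy
\begin{align}
X_i = \motion_{i,j}(F^{\symgrp})\act X_j, \qquad F_i^{\symgrp} = \nu(X_i).
\end{align}
This is exactly the skeleton equations \eqref{eqs:skeleton} for $\skel(\isoc)(f)$ on $\Man$ with initial point $x_0 = [g_0]$; by the uniqueness assumption on solutions stated just after \eqref{eqs:skeleton}, we conclude $[G_{\final}] = X_{\final}$, i.e., $\pi\circ\skel(\conn_{\pm})(\lifted[\pm]) = \skel(\isoc)(f)\circ \pi$.

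The proof is essentially bookkeeping, and the only subtle point is checking that the two sign conventions $(\conn_{+},\lifted[+])$ and $(\conn_{-},\lifted[-])$ both yield $\nu^{\symgrp} = \nu\circ\pi$; this is where the cancellation $\pairing{\isoc}{f}([g])\mul g\mul g\inv = \pairing{\isoc}{f}([g])$ (and its mirror) is needed, and once this is in place the skeleton descends automatically because the motion maps $\motion_{i,j}$ take values in $\symgrp$ and the projection $\pi$ is $\symgrp$-equivariant. Note that equivariance of the skeleton is not used directly here: what is used is simply that the skeleton is built from $\symgrp$-valued motions acting on the ambient homogeneous space, and the hypothesis of equivariance is what makes the statement meaningful, since it is what guarantees that $\skel(\conn_{\pm})(\lifted[\pm])$ itself is a well-defined left-invariant object on $\symgrp$ whose projection is independent of the chosen representative $g$ of $x = [g]$.
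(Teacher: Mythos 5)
Your proposal is correct and follows essentially the same route as the paper: both hinge on the identity $\pairing{\conn_{\pm}}{\lifted[\pm]}_{g} = \pairing{\isoc}{f}_{[g]}$ and on projecting the edge relations $G_i = \motion_{i,j}(F)\mul G_j$ to $[G_i] = \motion_{i,j}(F)\act [G_j]$, so the projected stages satisfy the skeleton equations on $\Man$. (Only a cosmetic remark: you wrote $\pairing{\conn_{-}}{X}_g = g\inv\mul X(g)$, whereas \eqref{eq:defmaurercartanminus} carries a minus sign; this cancels against the minus in $\lifted[-][\isoc]$, so your conclusion is unaffected.)
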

\begin{proof}
	% We write the proof for $\lifted[+]$, the other being similar.
	On the edge $i,j$, we obtain the equation
	\(
	G_i = \motion_{i,j}({F}) \mul G_j
	\),
	so, as $[G_i] = G_i \mul \isogrp$, we have
	\(
		[G_i] = \motion_{i,j}(F) \act [G_j]
	\).
	Notice now that by definition of the lifted vector field $\lifted[\pm]$, we have $\pairing{\conn_{\pm}}{\lifted[\pm]}_{g} = \pairing{\isoc}{f}_{[g]}$, so
	\(
		{F}_i = \pairing{\conn_{\pm}}{\lifted[\pm]}_{G_i} = \pairing{\conn}{f}_{[G_i]}
	\),
	and we conclude that $[G_{\final}]$ is the image of $[G_{\initial}]$ by $\skel(\isoc)(f)$.
\end{proof}

\section{Zero-order Equivariant Isotropy Maps: Connections and Reductivity}
\label{sec:connections}

We study the isotropy maps that are one-forms, i.e., that only depend on the value of the vector field at a given point $x\in\Man$, and not on its derivatives.
These forms can thus be regarded as ``zero order'' isotropy maps.
The main results of this section are summed up in \autoref{fig:connections}.

\begin{figure}
	\centering
\begin{tikzpicture}[
	% pzm/.style={font=\Large\bfseries, inner sep=1pt, color=red!70!black,  shape=circle, fill=white, draw=black },
	concept/.style={shape=circle, fill=OldLace, draw=black, text width=7em, align=center, thick},
	rotation/.style={>=stealth, ->, very thick, gray},
	birotation/.style={rotation, <->,},
	equivalence/.style={implies-implies, double, thick, double equal sign distance},
scale=2.5]
 
\begin{scope}[rotate=90, scale=1.3]
    \coordinate (Z) at (0:1);
    \coordinate  (P) at (120:1);
    \coordinate (M) at (-120:1);
\end{scope}

	\node[concept,] (redstruc) at (P) {Reductive\\structure};
	\node[concept,] (princconn) at (M) {Invariant\\principal\\connection};
	\node[concept,] (conn) at (Z) {Connection\\{\scriptsize Zero order equivariant isotropy map}};

	\draw[equivalence] (redstruc) to node[sloped,midway,above]{\autoref{sec:reductive}} (conn);
	\draw[equivalence] (conn) to node[sloped, midway, above]{\autoref{sec:princconn}} (princconn);
	\draw[equivalence] (princconn) to node[auto]{Nomizu \cite{No54}} (redstruc);

	\node[concept, fill=Lavender] at (0,0) {$\Lin_{\isogrp}(\bundle{\symalg/\isoalg}{\symalg})$};

	% \newcommand*\rotdist{1.3}
	% \newcommand*\rotangle{40}
 
	% \begin{scope}[rotate=90]
	% \draw[birotation] (-\rotangle:\rotdist) arc[start angle=-\rotangle, end angle=\rotangle, radius=\rotdist];
	% \node[above] at(0:\rotdist) {$\mathrm{SE+DL}$};
	% \end{scope}
 
\end{tikzpicture}
\caption[Concept Equivalence]{
	There is an affine bijection between the affine space of connections, of invariant connections and of reductive structures.
	All these spaces are isomorphic to the affine space $\Lin_{\isogrp}(\bundle{\symalg/\isoalg}{\symalg})$ of $\isogrp$-invariant linear sections from $\symalg/\isoalg$ to $\symalg$.
}
\label{fig:connections}
\end{figure}

\subsection{Connections}

We define connections as equivariant $\symalg$-valued one-forms fulfilling a consistency condition.

\begin{definition}
	% \label{def:connection}
We define an \demph{isotropy form} as a $\symalg$-valued one-form
\begin{align}
	\conn \in \Omega^1(\Man, \symalg)
\end{align}
satisfying the \demph{consistency condition}
\begin{align}
	\label{eq:conncons}
	\pairing{\conn}{f}_x \act x = f(x)
	.
\end{align}
\end{definition}

It is immediate that the set of isotropy forms has an \emph{affine structure}, i.e., if $\conn_1$ and $\conn_2$ are isotropy forms, then so is $\theta \conn_1 + (1-\theta)\conn_2$ for any real $\theta$.

\begin{definition}
\label{def:connection}
We say that an isotropy form $\conn$ is a \demph{connection} if it is \emph{equivariant}:
\begin{align}
	\label{eq:connequi}
	\pairing{\conn}{\Tan g \mul f}_{g\act x} = g \act \pairing{\conn}{f}_{x}
	\qquad
	g \in \symgrp
	,
	\quad
	x\in \Man
	,
\end{align}
where  the action of $g$ on $\symalg$ is the adjoint action~\eqref{eq:adjointaction}.
\end{definition}

Note that our definition differs from that of an invariant principal connection, and the relation between the two notions is detailed in \autoref{sec:princconn}.

Connections are, in a sense, the simplest possible local equivariant isotropy maps, because they are of \emph{order zero}, i.e., they depend only on the value of a vector field at a point, and not on its higher order derivatives.
More general local, 
equivariant isotropy maps are considered in {\cite{Ve14}}.

\begin{proposition}
	Given an isotropy form $\conn\in\Omega^1(\Man,\symalg)$, define the isotropy map $\isoc$ by $\pairing{\isoc}{f} \coloneqq \imath_f \conn$, that is
	\begin{align}
		\label{eq:defisoconn}
		\pairing{\isoc}{f}(x) \coloneqq \pairing{\conn}{f}_x
		.
	\end{align}
	The map $\isoc$ is then a local isotropy map.
	The isotropy map $\isoc$ is equivariant if and only the isotropy form $\conn$ is equivariant, that is, if $\conn$ is a connection.
\end{proposition}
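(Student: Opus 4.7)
The plan is to verify each of the three claims in turn — well-definedness as an isotropy map, locality, and the equivariance equivalence — each of which reduces to unwinding the relevant definition. None of the steps should present substantial difficulty; this proposition is essentially a ``translation'' between the one-form formalism and the isotropy map formalism.

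First I would check that $\isoc$ really is an isotropy map. Linearity in $f$ is immediate because $\conn$ is a one-form (hence linear on each fibre of $\Tan\Man$) and $f \mapsto f(x)$ is linear, so $f \mapsto \pairing{\conn}{f}_x$ is linear in $f$. The consistency condition \eqref{eq:isoconsistency} is a direct consequence of the consistency condition \eqref{eq:conncons} for $\conn$: plugging in the definition yields $\pairing{\isoc}{f}(x) \act x = \pairing{\conn}{f}_x \act x = f(x)$.

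Next I would establish locality. The key observation is that the value $\pairing{\conn}{f}_x$ depends on $f$ only through $f(x)$, since $\conn_x$ is a linear form on $\Tan[x]\Man$. Consequently, whenever $f(x) = 0$ we have $\pairing{\isoc}{f}(x) = 0$, so
\begin{align*}
\setc{x \in \Man}{\pairing{\isoc}{f}(x) \neq 0}
\subset
\setc{x \in \Man}{f(x) \neq 0}.
\end{align*}
Taking closures on both sides yields $\supp\paren{\pairing{\isoc}{f}} \subset \supp(f)$.

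Finally I would prove the equivariance equivalence by unwinding both sides. Using the definition \eqref{eq:defactvf} of the action on vector fields, we have
\begin{align*}
\pairing{\isoc}{g \act f}(x) = \pairing{\conn}{g \act f}_x = \pairing{\conn}{\Tan g \mul f(g\inv \act x)}_x,
\end{align*}
while the right-hand side of isotropy-map equivariance gives
\begin{align*}
\paren{g \act \pairing{\isoc}{f}}(x) = g \act \pairing{\conn}{f}_{g\inv \act x}.
\end{align*}
Thus $\isoc$ is equivariant precisely when, for every $g,x$ and every vector field $f$, $\pairing{\conn}{\Tan g \mul f(g\inv \act x)}_x = g \act \pairing{\conn}{f}_{g\inv \act x}$. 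Substituting $y = g\inv \act x$ (so $x = g \act y$) transforms this identity into \eqref{eq:connequi}, i.e.\ the connection equivariance. The implication in the reverse direction is by running the same substitution backwards. Since vector fields at a prescribed point $y$ can take any value in $\Tan[y]\Man$, the pointwise identity between the two forms is equivalent to the condition holding for all vector fields, completing the equivalence.

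The main conceptual point — the one I would take care to state clearly rather than a computational obstacle — is that the change of variable $y = g\inv \act x$ is what aligns the two equivariance formulations. Everything else is a direct substitution of definitions.
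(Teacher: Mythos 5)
Your proposal is correct and follows essentially the same route as the paper: locality from the pointwise dependence of $\conn_x$ on $f(x)$, and the equivariance equivalence by unwinding the definitions \eqref{eq:defactvf}, \eqref{eq:defactfMG} and \eqref{eq:connequi} with the change of variable $y = g\inv\act x$. The only additions are the (welcome but routine) verification of consistency and the remark that every tangent vector extends to a vector field, which the paper leaves implicit.
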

\begin{proof}
	If $f$ is zero in a neighbourhood of $x\in\Man$, then, in particular, $f(x) = 0$, so $\pairing{\isoc}{f}(x) = \pairing{\conn}{f}_x = 0$, which proves the locality property.

	We note, using the definitions~\eqref{eq:defisoconn},~\eqref{eq:defactvf}, \eqref{eq:connequi} and \eqref{eq:defactfMG}, that % chktex 2
	\begin{align}
	\pairing{\isoc}{g\act f}(x) 
	= \pairing{\conn}{g\act f}_x
	= \pairing{\conn}{\Tan g \mul f}_{g\inv x}
	= g \act \pairing{\conn}{f}_{g\inv x}
	= \paren[\big]{g\act \pairing{\isoc}{f}}(x)
	,
	\end{align}
	which shows that $\isoc$ is equivariant if and only if $\conn$ is.
\end{proof}

Note that a connection defines at the origin a linear map
\begin{align}
	\label{eq:defconnorig}
	\conn_{\origin} \in \Lin\paren{\symalg/\isoalg , \symalg}
\end{align}
where we use the canonical identification 
\begin{align}
\symalg/\isoalg \equiv \Tan[\origin]\Man
.
\end{align}
The map $\conn_{\origin}$ is $\isogrp$-equivariant, so
\begin{align}
	\conn_{\origin} \in \Lin_{\isogrp}(\symalg/\isoalg, \symalg),
\end{align}
where, in general,
\begin{align}
	 \Lin_{\isogrp}(V, W) \coloneqq \setc{\varphi \in \Lin(V,W)}{\varphi(h\act v) = h\act\varphi(v)\qquad h\in\isogrp}
\end{align}
denotes the $\isogrp$-equivariant linear maps from a vector space $V$ to a vector space $W$, both equipped with a linear $\isogrp$-action.

Now, the infinitesimal action of $\symalg$ on $\Tan[\origin]\Man \equiv \symalg/\isoalg$ is just the projection $\xi \to \xi + \isoalg$.
We obtain that the consistency condition~\eqref{eq:conncons} becomes
\begin{align}
	\pairing{\conn_{\origin}}{f} + \isoalg = f
	,
	\qquad
	f \in \symalg/\isoalg
	,
\end{align}
so we interpret $\conn_{\origin}$ as a \emph{linear section} of the projection $\symalg \to \symalg/\isoalg$.
We denote the corresponding affine space
\begin{align}
	\Lin(\bundle{\symalg/\isoalg}{\symalg}) \coloneqq \setc[\big]{\omega \in \Lin(\symalg/\isoalg, \symalg)}{\pairing{\omega}{f} + \isoalg = f \quad \forall f \in \symalg/\isoalg}
	.
\end{align}

The affine space of $\isogrp$-invariant sections is denoted by
\begin{align}
	\Lin_{\isogrp}(\bundle{\symalg/\isoalg}{\symalg}) \coloneqq \Lin(\bundle{\symalg/\isoalg}{\symalg}) \cap \Lin_{\isogrp}(\symalg/\isoalg, \symalg)
	.
\end{align}

\begin{proposition}
\label{prop:connequi}
	The map from the affine space of connections to
	\begin{align}
		\Lin_{\isogrp}(\bundle{\symalg/\isoalg}{\symalg})
	\end{align}
	defined by $\conn \mapsto \conn_{\origin}$
	where $\conn_{\origin}$ is defined in~\eqref{eq:defconnorig},
	is an affine bijection.

\end{proposition}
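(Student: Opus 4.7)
The plan is to verify that $\conn \mapsto \conn_{\origin}$ is well-defined into the prescribed affine space, and then to construct an inverse using the equivariance of connections together with the transitivity of the $\symgrp$-action on $\Man$.

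First I would check well-definedness. Given a connection $\conn$, the value $\conn_{\origin}$ is a linear map $\Tan[\origin]\Man \to \symalg$, which via the canonical identification $\Tan[\origin]\Man \equiv \symalg/\isoalg$ becomes a map in $\Lin(\symalg/\isoalg,\symalg)$. The consistency condition \eqref{eq:conncons} reads $\pairing{\conn_{\origin}}{\xi+\isoalg}\act\origin = \xi\act\origin$, which under the identification $\Tan[\origin]\Man \equiv \symalg/\isoalg$ precisely says that $\conn_{\origin}$ is a section of $\symalg \to \symalg/\isoalg$. For $h\in\isogrp$, equivariance \eqref{eq:connequi} applied with $g=h$ and $x=\origin$ gives $\pairing{\conn_{\origin}}{\Tan h\mul f} = h\act\pairing{\conn_{\origin}}{f}$, so $\conn_{\origin}\in \Lin_{\isogrp}(\bundle{\symalg/\isoalg}{\symalg})$. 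Affineness of the map is immediate since evaluation of forms at a point is linear.

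For injectivity, suppose two connections $\conn, \conn'$ agree at $\origin$. For any $x\in\Man$, pick $g\in\symgrp$ with $x = g\act\origin$. For $f\in\Tan[x]\Man$ write $f = \Tan g \mul \tilde f$ with $\tilde f \in \Tan[\origin]\Man$; then equivariance yields
\begin{align}
\pairing{\conn}{f}_x = g\act\pairing{\conn_{\origin}}{\tilde f} = g\act\pairing{\conn'_{\origin}}{\tilde f} = \pairing{\conn'}{f}_x,
\end{align}
so $\conn = \conn'$.

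For surjectivity — which is the real work — given $\omega_0 \in \Lin_{\isogrp}(\bundle{\symalg/\isoalg}{\symalg})$ I would define a candidate connection $\conn$ by the same formula: for $x\in\Man$ and $f\in\Tan[x]\Man$, pick $g$ with $x=g\act\origin$, write $\tilde f = \Tan(g\inv)\mul f \in \Tan[\origin]\Man$, and set
\begin{align}
\pairing{\conn}{f}_x \coloneqq g\act \pairing{\omega_0}{\tilde f}.
\end{align}
The main obstacle, and the heart of the proof, is showing this is independent of the choice of $g$. If $g_1\act\origin = g_2\act\origin$, then $h \coloneqq g_2\inv g_1 \in \isogrp$, so $\tilde f_1 = h\inv\act \tilde f_2$; the $\isogrp$-equivariance of $\omega_0$ then yields $\pairing{\omega_0}{\tilde f_1} = h\inv\act\pairing{\omega_0}{\tilde f_2}$, and multiplying by $g_1 = g_2 h$ on the left gives equality of the two definitions. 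Smoothness of $\conn$ follows by choosing local smooth sections of $\symgrp\to\Man$.

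Finally I would verify that $\conn$ so defined is indeed a connection. The consistency condition \eqref{eq:conncons} at $x = g\act\origin$ reduces, by pulling back by $g$, to the analogous identity at $\origin$, which holds because $\omega_0$ is a section. Equivariance \eqref{eq:connequi} follows directly: given $g'\in\symgrp$, if $x=g\act\origin$ then $g'\act x = (g'g)\act\origin$, and the defining formula with $g'g$ in place of $g$ gives $\pairing{\conn}{\Tan g'\mul f}_{g'\act x} = g'\act\pairing{\conn}{f}_x$. Hence $\conn \in$ the affine space of connections and restricts to $\omega_0$ at the origin, completing the proof.
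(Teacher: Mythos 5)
Your proposal is correct and follows essentially the same route as the paper: the inverse is constructed by the extension formula $\pairing{\conn}{f}_{g\act\origin} = g\act\pairing{\omega_0}{\Tan g\inv \mul f}$, with the $\isogrp$-equivariance of $\omega_0$ guaranteeing independence of the choice of $g$. You simply spell out more of the routine steps (well-definedness of the forward map, injectivity, consistency and equivariance of the extension) that the paper leaves to the reader.
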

	The proof is elementary, and is a simplified version of the extension principle presented in details in~\cite[\S\,4.2]{MKVe13}.
\begin{proof}
	Pick an element $\nu \in \Lin_{\isogrp}(\bundle{\symalg/\isoalg}{\symalg})$.
	It defines a connection defined at the point $x = g \act \origin$ by
	\(
			\pairing{\conn}{f}_{g \act \origin} = g \act \pairing{\nu}{\Tan g\inv f}
	\).
	One checks that the $\isogrp$-equivariance of $\nu$ ensures that this map is well defined, i.e., does not depend on which element $g\in\symgrp$ is chosen such that $x = g \act \origin$.
\end{proof}

\subsection{Reductive decompositions}
\label{sec:reductive}

We proceed to show the relation with the existing concept of reductive decompositions.

\begin{definition}
A \demph{reductive decomposition}~\cite{No54} is a decomposition
\begin{align}
	\symalg = \isoalg \oplus \redalg
\end{align}
such that
\begin{align}
	\label{eq:redequi}
H \act \redalg \subset \redalg
.
\end{align}
\end{definition}

Note that if $\isogrp$ is simply connected, the condition~\eqref{eq:redequi} is equivalent to
\begin{align}
	\bracket{\isoalg, \redalg} \subset \redalg
	.
\end{align}

We identify a complementary subspace to $\isoalg$ as a {section} of the projection $\symalg \to \symalg/\isoalg$, that is a linear map
\begin{align}
	\redsec \colon \symalg/\isoalg \to \symalg
\end{align}
such that
\begin{align}
	\redsec(x)+\isoalg = x \qquad x \in \symalg/\isoalg
\end{align}

We have the following relation between equivariance and reductivity:
\begin{lemma}
	The section $\redsec$ is reductive if and only if it is $\isogrp$-equivariant, that is
	\begin{align}
		h \act \redsec(x) = \redsec(h \act x)
	\end{align}
\end{lemma}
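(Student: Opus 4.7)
The plan is to translate the reductivity condition $\isogrp \act \redalg \subset \redalg$ into a statement about the section $\redsec$ by working with the projection $\pi \colon \symalg \to \symalg/\isoalg$. Two preliminary observations structure the argument. First, the adjoint action of $\isogrp$ on $\symalg$ preserves $\isoalg$ (since $\isogrp$ is a subgroup with Lie algebra $\isoalg$), so it descends to an action on $\symalg/\isoalg$, and $\pi$ is $\isogrp$-equivariant. Second, the image $\redalg \coloneqq \redsec(\symalg/\isoalg)$ is a linear complement to $\isoalg$ because $\pi \circ \redsec = \Id$; in particular, the restriction $\pi|_{\redalg} \colon \redalg \to \symalg/\isoalg$ is a linear isomorphism, so any element of $\redalg$ is uniquely determined by its image under $\pi$.

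For the forward direction, I assume $\redsec$ is $\isogrp$-equivariant. Any element of $\redalg$ has the form $\redsec(x)$, and then
\begin{align}
	h \act \redsec(x) = \redsec(h \act x) \in \redalg
\end{align}
for every $h \in \isogrp$, which is precisely the reductivity condition $\isogrp \act \redalg \subset \redalg$.

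For the converse, I assume reductivity and fix $x \in \symalg/\isoalg$ and $h \in \isogrp$. Both $h \act \redsec(x)$ and $\redsec(h \act x)$ lie in $\redalg$ — the first by the reductivity hypothesis, the second by definition of $\redsec$. Their images under $\pi$ coincide: on the one hand, by $\isogrp$-equivariance of $\pi$ and the section property,
\begin{align}
	\pi\paren[\big]{h \act \redsec(x)} = h \act \pi\paren[\big]{\redsec(x)} = h \act x,
\end{align}
and on the other hand $\pi(\redsec(h \act x)) = h \act x$. Uniqueness of the lift to $\redalg$ then gives $h \act \redsec(x) = \redsec(h \act x)$, proving $\isogrp$-equivariance of $\redsec$.

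The argument has no real obstacle; the only subtlety worth flagging is making sure the $\isogrp$-action on $\symalg/\isoalg$ is well defined (which follows from $H \act \isoalg \subset \isoalg$) so that $\pi$ intertwines the two actions, since this is what allows the projection argument to be used symmetrically in the converse direction.
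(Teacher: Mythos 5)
Your proof is correct and follows essentially the same route as the paper's: both identify $h \act \redsec(x)$ as the unique element of $\redalg$ projecting to $h \act x$, using that the quotient projection intertwines the $\isogrp$-actions and that $\redsec$ is a section. You additionally spell out the trivial forward direction and the well-definedness of the action on $\symalg/\isoalg$, which the paper leaves implicit.
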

\begin{proof}
	As $\redsec$ is reductive, we have that
	\begin{align}
		h\act \redsec(x) = \redsec(x')
	\end{align}
	for some $x' \in \symalg/\isoalg$.
	But since $\redsec$ is a section, we have
	\begin{align}
		x' = \redsec(x') + \isoalg = h \act \redsec(x) + \isoalg = h \act (\redsec(x) + \isoalg) = h \act x
	\end{align}
	which proves the claim.
\end{proof}

We thus obtain that reductive structures are equivalent to connections.

\begin{proposition}
\label{prop:reducaff}
	The set of reductive structures is in affine bijection with
\begin{align}
	\Lin_{\isogrp}(\bundle{\symalg/\isoalg}{\symalg})
\end{align}
\end{proposition}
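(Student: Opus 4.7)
The plan is to assemble the bijection from two elementary pieces: a linear-algebraic identification of reductive decompositions with linear sections of $\symalg \to \symalg/\isoalg$, and the equivariance lemma that precedes the proposition.

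First, I would set up the bijection between complementary subspaces to $\isoalg$ in $\symalg$ and linear sections in $\Lin(\bundle{\symalg/\isoalg}{\symalg})$. Given a decomposition $\symalg = \isoalg \oplus \redalg$, the restriction of the projection $\symalg \to \symalg/\isoalg$ to $\redalg$ is a linear isomorphism; its inverse defines a section $\redsec$ whose image is exactly $\redalg$. Conversely, given $\redsec \in \Lin(\bundle{\symalg/\isoalg}{\symalg})$, the subspace $\redalg \coloneqq \redsec(\symalg/\isoalg)$ is complementary to $\isoalg$ since the section condition $\redsec(x) + \isoalg = x$ guarantees both that $\redalg + \isoalg = \symalg$ and that $\redalg \cap \isoalg = 0$. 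These two assignments are inverse to each other.

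Next, I would transfer the affine structure. The space $\Lin(\bundle{\symalg/\isoalg}{\symalg})$ is naturally an affine space (convex combinations of sections are sections, since the section condition is affine), and the bijection above endows the set of complementary subspaces with the induced affine structure. In this setting, the $\isogrp$-equivariance condition on $\redsec$ cuts out the affine subspace $\Lin_{\isogrp}(\bundle{\symalg/\isoalg}{\symalg})$.

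Finally, I would invoke the lemma just established: a section $\redsec$ is reductive, i.e.\ satisfies $H \act \redalg \subset \redalg$ for $\redalg = \redsec(\symalg/\isoalg)$, if and only if $\redsec$ is $\isogrp$-equivariant. Restricting the bijection of the first step to the reductive side yields the claimed affine bijection onto $\Lin_{\isogrp}(\bundle{\symalg/\isoalg}{\symalg})$. There is no real obstacle here, since the substantive content (the equivalence between stability of $\redalg$ under $\isogrp$ and equivariance of $\redsec$) has already been handled by the lemma; the proposition is essentially a repackaging together with the observation that the linear-section / complementary-subspace correspondence transports the affine structure.
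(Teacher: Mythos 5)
Your proposal is correct and follows essentially the same route as the paper: the paper identifies complementary subspaces with linear sections of $\symalg \to \symalg/\isoalg$, establishes the lemma that a section is reductive if and only if it is $\isogrp$-equivariant, and then states the proposition as an immediate consequence. Your write-up simply makes explicit the details (inverse bijection between complements and sections, transport of the affine structure) that the paper leaves implicit.
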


A consequence of \autoref{prop:connequi} and \autoref{prop:reducaff} is the following result.

\begin{proposition}
The affine space of connections and the set of reductive structures are in affine bijection with the affine space $\Lin_{\isogrp}(\bundle{\symalg/\isoalg}{\symalg})$.
	 The underlying linear space is
	\begin{align}
		\Lin_{\isogrp}(\symalg/\isoalg, \isoalg)
		.
	\end{align}
\end{proposition}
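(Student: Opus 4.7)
The plan is to chain the two preceding propositions and then identify the difference space. The first half of the statement is essentially already done: \autoref{prop:connequi} gives an affine bijection between connections and $\Lin_{\isogrp}(\bundle{\symalg/\isoalg}{\symalg})$, while \autoref{prop:reducaff} gives an affine bijection between reductive structures and the same space. So I would begin the proof by simply invoking both and composing them, which yields the desired affine bijection between connections and reductive structures, both parametrized by a common model affine space.

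The only genuine content left is identifying the underlying linear space of $\Lin_{\isogrp}(\bundle{\symalg/\isoalg}{\symalg})$. The idea is standard: given two sections $\omega_1, \omega_2 \in \Lin_{\isogrp}(\bundle{\symalg/\isoalg}{\symalg})$, their difference $\omega \coloneqq \omega_1 - \omega_2$ is still linear and $\isogrp$-equivariant, but for every $f \in \symalg/\isoalg$ we compute
\begin{align}
\pairing{\omega}{f} + \isoalg = \pairing{\omega_1}{f} - \pairing{\omega_2}{f} + \isoalg = f - f = 0,
\end{align}
so $\pairing{\omega}{f} \in \isoalg$. Conversely, adding to a fixed section any $\isogrp$-equivariant linear map $\symalg/\isoalg \to \isoalg$ produces another element of $\Lin_{\isogrp}(\bundle{\symalg/\isoalg}{\symalg})$, since adding something valued in $\isoalg$ preserves the section condition and equivariance is preserved under sums.

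The net effect is that the map $\omega_1 - \omega_2 \mapsto \omega_1 - \omega_2$ identifies the underlying linear space with $\Lin_{\isogrp}(\symalg/\isoalg, \isoalg)$, as claimed. I do not expect any real obstacle here; the only small point of care is checking that the difference of two $\isogrp$-equivariant sections really does land in $\isoalg$ (not merely in $\symalg$), which is just the computation above, and that adding such a difference back to a section stays in the affine space, which is a one-line check. Everything else is bookkeeping from \autoref{prop:connequi} and \autoref{prop:reducaff}.
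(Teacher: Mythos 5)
Your proposal is correct and matches the paper's treatment: the paper states this proposition as an immediate consequence of \autoref{prop:connequi} and \autoref{prop:reducaff} without further proof, and your identification of the underlying linear space (the difference of two equivariant sections lands in $\isoalg$ and remains $\isogrp$-equivariant) is exactly the routine verification the paper leaves implicit.
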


\subsection{Invariant Principal Connections and Horizontal Lifts}
\label{sec:princconn}

Recall that a homogeneous space is a particular instance of a \emph{principal $\isogrp$-bundle}~\cite{Sh97, St64}.
In that context, a \emph{principal connection} is a $\isoalg$-valued one-form $\theta$ which is $\isogrp$-equivariant in the sense that \begin{align}\label{eq:hequivariance}\pairing{\theta}{X \mul h}_{g \mul h} = h\inv \mul \pairing{\theta}{X}_{g} \mul h,\qquad h\in\isogrp.\end{align}
To be a principal connection, $\theta$ must also satisfy the consistency condition
\begin{align}
	\label{eq:princconnconscond}
	\pairing{\theta}{g \mul \xi}_{g} = \xi 
	,
	\qquad
	\xi\in\isoalg
	.
\end{align}

Finally, such a principal connection \(\theta\) is \emph{invariant} if
\begin{align}
	\pairing{\theta}{\gb \mul X}_{\gb\mul g}  = \pairing{\theta}{X}_{g}
	,
	\qquad \gb,g \in \symgrp
	.
\end{align}

\begin{proposition}
\label{prop:princconn}
Consider a connection $\conn$ as defined in \autoref{def:connection}.
The $\symalg$-valued one-form $\theta$ defined by
\begin{align}
	\pairing{\theta}{X}_{g} \coloneqq g\inv \mul X - g\inv \mul \pairing{\conn}{[X]}_{[g]} \mul g 
	,
	\qquad
	g \in \symgrp
	,
	\quad
	X \in \Tan[g]\symgrp
	.
\end{align}
takes values in $\isoalg$ and is an invariant principal connection.
\end{proposition}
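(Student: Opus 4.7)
The plan is to verify the four structural requirements of $\theta$ in turn: that it takes values in $\isoalg$, that it satisfies the consistency condition \eqref{eq:princconnconscond}, that it is $\isogrp$-equivariant in the sense of \eqref{eq:hequivariance}, and finally that it is left-invariant.

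To see that $\pairing{\theta}{X}_g \in \isoalg$, it suffices to check that its infinitesimal action on $\origin$ vanishes. Writing $X = \xi \mul g$ with $\xi \in \symalg$, the projection yields $[X] = \xi \act [g]$. Since $g \act \origin = [g]$, both terms in the definition of $\pairing{\theta}{X}_g$ produce the same image $g\inv \act (\xi \act [g])$ when acting on $\origin$: the first directly from $g\inv \mul X = g\inv \xi g$, and the second by virtue of the consistency condition \eqref{eq:conncons}, which asserts $\pairing{\conn}{[X]}_{[g]} \act [g] = [X]$, so the two contributions cancel. For consistency \eqref{eq:princconnconscond}, I take $X = g\mul \xi$ with $\xi \in \isoalg$: the curve $t \mapsto [g\exp(t\xi)]$ is constant because $\exp(t\xi)\in\isogrp$ leaves cosets fixed, hence $[X] = 0$, the second term vanishes by linearity of $\conn$, and only $g\inv \mul X = \xi$ remains.

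The $\isogrp$-equivariance follows from the two identities $[gh] = [g]$ and $[Xh] = [X]$ for $h \in \isogrp$, both instances of $\pi\circ R_h = \pi$. Substituting into the definition of $\theta$ and factoring $h\inv$ and $h$ out of the two terms delivers the required $\pairing{\theta}{Xh}_{gh} = h\inv \mul \pairing{\theta}{X}_g \mul h$. For left-invariance, I substitute $X \mapsto \gb\mul X$ and $g \mapsto \gb\mul g$. The identity $\pi\circ L_{\gb} = \gb\cdot\pi$ gives $[\gb\mul X] = \gb\act [X]$ and $[\gb\mul g] = \gb\act [g]$. The crucial input is then the equivariance \eqref{eq:connequi} of the connection $\conn$, which provides
\begin{align*}
\pairing{\conn}{\gb\act [X]}_{\gb\act [g]} = \gb\mul \pairing{\conn}{[X]}_{[g]}\mul \gb\inv.
\end{align*}
The conjugation by $\gb$ on the right-hand side cancels exactly against the $\gb\inv$ and $\gb$ introduced by $(\gb\mul g)\inv$ and $\gb\mul g$, so the whole expression reduces to $\pairing{\theta}{X}_g$.

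The main obstacle is bookkeeping: the proof juggles left and right translations on $\symgrp$, the induced $\symgrp$-action on $\Man$, the infinitesimal $\symalg$-action on $\Man$, and the adjoint action appearing in the equivariance of $\conn$. Once the tangent-projection identities $[gh]=[g]$, $[Xh]=[X]$, $[\gb\mul g]=\gb\act [g]$, $[\gb\mul X]=\gb\act [X]$ and the consistency identity $\pairing{\conn}{[X]}_{[g]}\act [g] = [X]$ are marshaled, each of the four verifications collapses to a one-line manipulation.
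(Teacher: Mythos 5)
Your proposal is correct and follows essentially the same route as the paper: it verifies $\isoalg$-valuedness via the consistency condition \eqref{eq:conncons} (the paper phrases "the infinitesimal action at $\origin$ vanishes" as $g\inv\mul(\xi-\zeta)\mul g\mul\isogrp = 0$, which is the same identification $\isoalg = \ker(\symalg\to\Tan[\origin]\Man)$), then checks \eqref{eq:princconnconscond} with $X = g\mul\xi$, and establishes $\isogrp$-equivariance and invariance from $[X\mul h]=[X]$ and the equivariance \eqref{eq:connequi} of $\conn$, exactly as in the paper's proof.
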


Note that in term of the Maurer--Cartan form $\conn_{-}$ defined in~\eqref{eq:defmaurercartanminus}, \(\theta\) is defined as $\pairing{\theta}{X} \coloneqq -\pairing{\conn_{-}}{X}_g - g\inv \mul \pairing{\conn}{[X]}_{[g]} \mul g$.

\begin{proof}
Let us show that $\theta$ is indeed $\isoalg$-valued.
Define $\xi \coloneqq X \mul g\inv$,
and $\zeta \coloneqq \pairing{\conn}{[X]}_{[g]}$.
Notice that 
\begin{align}
	\label{eq:relthetaxizeta}
	\pairing{\theta}{X} = g\inv \mul (\xi - \zeta) \mul g
	.
\end{align}

The projected vector $[X]$ is  $[X] = \xi \act [g]$.
Now the consistency condition~\eqref{eq:conncons} reads
\(
	\zeta \act [g] = [X] = \xi \act [g],
\)
so, by multiplying on the left by $g\inv$ (which corresponds to pushing the vector forward to the identity), we obtain $g\inv\mul(\xi - \zeta)\mul g \mul \isogrp = 0$, which shows that \begin{align}g\inv\mul(\xi-\zeta)\mul g \in \isoalg,\end{align} which, along with~\eqref{eq:relthetaxizeta} implies $\pairing{\theta}{X} \in \isoalg$.

Now, choosing $X =  g \mul \xi$ for $\xi \in \isoalg$, we obtain $[X] = 0$, so $\pairing{\theta}{X} = g\inv \mul X = \xi$, and the consistency condition~\eqref{eq:princconnconscond} is thus fulfilled.

We obtain the $\isogrp$-equivariance by noticing that $[X\mul h] = [X]$, so
\begin{align}
	\pairing{\theta}{X \mul h}_{g\mul h} =
	h\inv \mul g\inv \mul X \mul h - h\inv \mul g\inv  \mul\pairing{\conn}{[X]}_{[g]}  \mul g\mul h
	= h\inv \mul \pairing{\theta}{X}_g \mul h
	.
\end{align}

Finally, we obtain the invariance of $\theta$ by using that $[\gb\mul X] = \Tan\gb \mul [X]$ and the equivariance of $\conn$:
\begin{align}
	\pairing{\theta}{\gb \mul X}_{\gb\mul g} &= g\inv\mul \gb \inv \mul \gb \mul X - g\inv\mul\gb\inv\mul \pairing{\conn}{\Tan\gb\mul [X]}_{\gb [g]}\mul\gb\mul g \\
	&= g\inv \mul X - g\inv\mul \pairing{\conn}{[X]}_{g} \mul g \\
	&= \pairing{\theta}{X}_{g}
	.
\end{align}
\end{proof}

In general, a connection on a principal $\isogrp$-bundle allows to \emph{lift} vector fields from the base to the manifold.
In our case, it means that one can lift vector fields on $\Man$ to vector fields on $\symgrp$.

The verification of the following proposition is straightforward.

\begin{proposition}
	Consider a vector field $f\in\diff$.
	Consider the lifted vector field $\lifted[+][\conn]$ defined in~\eqref{eq:deflifted}, i.e., by $\lifted[+][\conn](g) \coloneqq \pairing{\conn}{f}_{[g]}\mul g$.
	$\lifted[+]$ is then the horizontal lift of $f$ with respect to the invariant principal connection $\theta$ defined in \autoref{prop:princconn}.
\end{proposition}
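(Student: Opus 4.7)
The statement asks us to check that $\lifted[+][\conn]$ is the horizontal lift of $f$ with respect to the invariant principal connection $\theta$ constructed in \autoref{prop:princconn}. Being a horizontal lift is characterized by two conditions: projecting correctly, namely $[\lifted[+][\conn](g)] = f([g])$, and being horizontal, namely $\pairing{\theta}{\lifted[+][\conn](g)}_g = 0$. The plan is to verify these two conditions separately, both by direct substitution.

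For the projection property, I would compute $[\lifted[+][\conn](g)] = [\pairing{\conn}{f}_{[g]} \mul g]$. Since $\pairing{\conn}{f}_{[g]} \in \symalg$ and the projection of a left-invariantly generated tangent vector $\xi \mul g$ at $g$ is the infinitesimal action $\xi \act [g]$, this reduces to $\pairing{\conn}{f}_{[g]} \act [g]$. The consistency condition \eqref{eq:conncons} immediately gives $\pairing{\conn}{f}_{[g]} \act [g] = f([g])$, as required.

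For the horizontality property, I would substitute $X = \lifted[+][\conn](g) = \pairing{\conn}{f}_{[g]}\mul g$ into the formula
\begin{align}
\pairing{\theta}{X}_g = g\inv \mul X - g\inv \mul \pairing{\conn}{[X]}_{[g]} \mul g.
\end{align}
The first term becomes $g\inv \mul \pairing{\conn}{f}_{[g]} \mul g$. By the projection property just established, $[X] = f([g])$, so the second term equals $g\inv \mul \pairing{\conn}{f}_{[g]} \mul g$. The two terms cancel, giving $\pairing{\theta}{\lifted[+][\conn](g)}_g = 0$.

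There is no real obstacle here: the formula for $\theta$ in \autoref{prop:princconn} is almost designed so that $\lifted[+][\conn]$ lies in its kernel, the nontrivial content having already been absorbed into the proof that $\theta$ actually takes values in $\isoalg$ and is an invariant principal connection. The only subtlety is being careful about the identification between tangent vectors at $g \in \symgrp$ and elements of $\symalg$ via right-translation (so that $[\xi \mul g] = \xi \act [g]$), which is the same convention already used throughout \autoref{sec:rungekutta} and in the proof of \autoref{prop:princconn}.
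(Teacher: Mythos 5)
Your verification is correct and is precisely the computation the paper has in mind when it declares the proposition ``straightforward'' and omits the proof: the projection property follows from the consistency condition \eqref{eq:conncons}, and horizontality follows because substituting $X = \pairing{\conn}{f}_{[g]}\mul g$ into the defining formula for $\theta$ makes the two terms cancel once one knows $[X] = f([g])$. Nothing is missing.
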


\subsection{Flatness and Symmetry}

Since a connection  defined in \autoref{def:connection} can also be regarded as a principal connection (\autoref{sec:princconn}), it has a curvature.
It also has a torsion, as it is also an \emph{affine connection}~\cite{No54}.
We will not need the exact formulas for the torsion and curvature\footnote{The interested reader is referred to~\cite{No54}, or~\cite[\S\,X.2]{KoNo69}.}, and focus on whether the connection is flat (zero curvature), or symmetric (torsion free).

\begin{definition}
	We say that a connection (and its corresponding reductive structure $\redalg$) is \demph{symmetric} (or torsion free) if
	\begin{align}
		[\redalg, \redalg] \subset \isoalg
		.
	\end{align}
	We say that the connection is \demph{flat} (or has zero curvature) if $\redalg$ is a Lie subalgebra, that is
	\begin{align}
		[\redalg, \redalg] \subset \redalg
		.
	\end{align}
\end{definition}

The connection is thus flat and symmetric if and only if $\redalg$ is a trivial Lie algebra, i.e., $[\redalg, \redalg] = 0$.

In the presence of a symmetric connection, the homogeneous space is called a \demph{symmetric space}.
In particular, Cartan performed a classification of the symmetric spaces with compact isotropy group~\cite{Ca26}.
We refer to the monograph~\cite{He62} for further details.

\subsection{Existence of Connections}

We first give a general theorem of existence of reductive structure (and thus of connections)~\cite[\S\,X.2]{KoNo69}.
\begin{proposition}
\label{prop:connexistence}
	If the isotropy group $\isogrp$ is either compact, or connected and semi-simple, there exists a reductive structure.
\end{proposition}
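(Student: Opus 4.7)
The plan is to split into the two hypotheses and produce a complementary subspace $\redalg$ to $\isoalg$ inside $\symalg$ that is stable under the adjoint action of $\isogrp$; concretely, I want to build an $\isogrp$-invariant projector $\symalg \to \isoalg$, or equivalently an $\isogrp$-invariant complement, and then verify that the decomposition $\symalg = \isoalg \oplus \redalg$ satisfies \eqref{eq:redequi}.

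For the compact case, I would use the standard averaging trick. Start with any inner product $\langle \cdot, \cdot \rangle_0$ on the finite-dimensional vector space $\symalg$, and average it against the (bi-invariant) normalised Haar measure $\dd h$ on $\isogrp$ to obtain
\begin{align}
\langle \xi, \eta \rangle \coloneqq \int_{\isogrp} \langle h \act \xi, h \act \eta \rangle_0 \, \dd h .
\end{align}
This yields an inner product on $\symalg$ which is $\isogrp$-invariant by construction. Since $\isoalg$ is the Lie algebra of $\isogrp$, it is preserved by the adjoint action of $\isogrp$, hence its orthogonal complement $\redalg \coloneqq \isoalg^{\perp}$ is also $\isogrp$-invariant, giving the required reductive decomposition.

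For the connected semi-simple case, averaging is not available (there is no finite Haar measure), so I would instead invoke Weyl's theorem on complete reducibility. The adjoint action of $\symgrp$ restricted to $\isogrp$ makes $\symalg$ a finite-dimensional representation of the Lie algebra $\isoalg$, and $\isoalg$ sits inside $\symalg$ as an $\isoalg$-subrepresentation (by the Jacobi identity, $[\isoalg, \isoalg] \subset \isoalg$). Since $\isoalg$ is semi-simple, every finite-dimensional representation is completely reducible, so there exists an $\isoalg$-invariant complement $\redalg$ satisfying $\symalg = \isoalg \oplus \redalg$ and $[\isoalg, \redalg] \subset \redalg$. Because $\isogrp$ is connected, the exponential of $\isoalg$ generates $\isogrp$, so infinitesimal $\isoalg$-invariance upgrades to group-level $\isogrp$-invariance, which is exactly \eqref{eq:redequi}.

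The main obstacle, if any, is the semi-simple case, since it relies on the non-trivial Weyl complete reducibility theorem; the compact case is essentially a one-line averaging argument. A minor subtlety worth flagging is the passage from $\isoalg$-invariance to $\isogrp$-invariance in the semi-simple case, which genuinely uses the connectedness hypothesis (without it, one could only guarantee invariance under the identity component of $\isogrp$). No additional machinery beyond the definitions of \autoref{sec:reductive} and these two classical facts is required.
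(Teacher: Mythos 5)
Your proof is correct; the paper itself does not prove this proposition but simply cites \cite[\S\,X.2]{KoNo69}, and your two-case argument (Haar averaging of an inner product for compact $\isogrp$, Weyl complete reducibility plus connectedness for the semi-simple case) is precisely the standard argument behind that reference. No gaps.
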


In practice, the results of \autoref{prop:connexistence} or \autoref{prop:connequi} are of limited use, because it is preferable to have an explicit expression for the connection.
The following result, which proof is left to the reader, is used repeatedly to obtain tractable formulas in the examples of \autoref{sec:connexamples}, and to calculate the flatness or symmetry of the connection.

\begin{proposition}
\label{prop:formula}
	Suppose that a linear form $\conn \in \Omega^1(\Man, \symalg)$ is $\symgrp$-equivariant and consistent at the origin, i.e.,
	\(
		\pairing{\conn}{f}_{\origin} \act \origin = f(\origin)
	\).
	Then it is a connection corresponding to the reductive decomposition 
	\begin{align}
	\redalg = \conn(\Tan[\origin]\Man)
	.
	\end{align}
\end{proposition}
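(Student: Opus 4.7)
The plan is to first upgrade consistency at $\origin$ to consistency at every point via $\symgrp$-equivariance, and then to read off the reductive decomposition from the image of $\conn_{\origin}$.

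For the first step, I would pick an arbitrary $x\in\Man$ and $v\in\Tan[x]\Man$, choose $g\in\symgrp$ with $g\act\origin = x$ (available by transitivity), and set $w \coloneqq \Tan g\inv \mul v \in \Tan[\origin]\Man$. Applied pointwise, the equivariance \eqref{eq:connequi} yields $\pairing{\conn}{v}_x = g \act \pairing{\conn}{w}_{\origin}$. Combining this with consistency at $\origin$ and the elementary identity $(g\act\xi)\act(g\act y) = g\act(\xi\act y)$ relating the adjoint and base actions gives
\begin{align}
\pairing{\conn}{v}_x \act x = (g\act \pairing{\conn}{w}_{\origin}) \act (g\act \origin) = g \act (\pairing{\conn}{w}_{\origin}\act \origin) = \Tan g \mul w = v,
\end{align}
so $\conn$ satisfies the global consistency condition \eqref{eq:conncons}. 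Since $\conn$ is also equivariant by hypothesis, it is a connection in the sense of \autoref{def:connection}.

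For the second step, I would examine the restriction $\conn_{\origin}\colon \Tan[\origin]\Man \to \symalg$. Under the canonical identification $\Tan[\origin]\Man \cong \symalg/\isoalg$, the consistency just established at $\origin$ says exactly that $\conn_{\origin}$ is a linear section of the quotient projection $\symalg \to \symalg/\isoalg$. Consequently its image $\redalg \coloneqq \conn_{\origin}(\Tan[\origin]\Man)$ is a linear complement of $\isoalg$, yielding the direct sum $\symalg = \isoalg \oplus \redalg$.

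It then remains to verify the reductivity condition $\isogrp \act \redalg \subset \redalg$. For $h\in\isogrp$ and $\xi = \pairing{\conn}{w}_{\origin} \in \redalg$, I would apply the equivariance \eqref{eq:connequi} with $g=h$, using $h\act\origin = \origin$, to obtain $h\act \xi = \pairing{\conn}{\Tan h \mul w}_{\origin} \in \redalg$, as required. No real obstacle is anticipated; the one point demanding care is the transition in the first step from the equivariance as formulated for vector fields to its pointwise incarnation on tangent vectors, but this is immediate because $\conn$ is a one-form and both sides of \eqref{eq:connequi} depend on $f$ only through its value at the relevant point.
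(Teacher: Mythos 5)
Your proof is correct and proceeds exactly along the lines the paper intends (the paper leaves this proof to the reader): propagating consistency from $\origin$ to all of $\Man$ by equivariance, identifying $\conn_{\origin}$ as a linear section of $\symalg\to\symalg/\isoalg$ so that its image is a complement of $\isoalg$, and deducing $\isogrp$-invariance of that complement from equivariance at $h\in\isogrp$. All three steps check out, including the identity $(g\act\xi)\act(g\act y)=\Tan g\mul(\xi\act y)$ and the pointwise reading of \eqref{eq:connequi}.
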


\section{Connection Examples}
\label{sec:connexamples}

We study connections in homogeneous spaces of interest in numerical analysis.
The situation varies a lot.
Some homogeneous spaces have no connection at all (\autoref{prop:fixedranknoconn}), some have only one (\autoref{prop:scalred}), some have infinitely many (\autoref{sec:cartanschouten}).
There is also a practical aspect, as the connection is not always available in closed form.
We refer to \autoref{tab:homogconn} for a summary of the study of the examples in this section.

\newcommand*\mmidrule{\arrayrulecolor{gray!30}\midrule\arrayrulecolor{black}}
\newcommand*\cmmidrule[1]{\arrayrulecolor{gray!30}\cmidrule{#1}\arrayrulecolor{black}}
% \newgeometry{textwidth=300cm}
\newcommand*\crossmark{\text{\sffamily x}}
\afterpage{
	\clearpage
\begin{landscape}
		\centering
	\begin{tabular}{llll|c|cc|l}
		\toprule
		\rowcolor{Linen}
		Manifold & Group & Isotropy & Action  & Connection & Sym. & Flat 
		&
		\S
		\\
		\midrule
		$\RR^d$ 
		&
		$\isogrp\ltimes \RR^d$ 
		&
		$\isogrp \subset \GL$ 
		&
		$h x + a$
		& $ \affalg{0}{\delta x}$& $\checkmark$ & $\checkmark$  
		&
		\ref{sec:affine}
		\\
		\midrule
		Stiefel 
		&
		$\SO[d]$
		&
		$\SO[d-k]$ 
		&
		$R \mul Q$ 
		&
		$ \begin{aligned} 
			&
			{\delta Q} \mul Q\tp\\-&Q \mul {\delta Q}\tp \\ -& Q \mul \delta Q\tp \mul Q \mul Q\tp \end{aligned}$
		& $\crossmark$ &  $\crossmark$ 
		&
		\ref{sec:stiefel}
		\\
		\cmmidrule{1-7}
		% \mmidrule
		Sphere
		& 
		&
		$\SO[d-1]$
		&
		& $\begin{aligned}  {\delta Q} \mul Q\tp- Q \mul {\delta Q}\tp  \end{aligned}$
		& $\checkmark$ & $\crossmark$
		&
		\\
		\midrule
		Isospectral 
		&
		$\SO[\abs{\prt}]$ 
		&
		$\SO[\prt]$ 
		&
		$R \mul P \mul R\tp$ 
		& No formula
		& $\crossmark$
		& $\crossmark$
		&
		\ref{sec:isospectral}
		\\
		\cmmidrule{1-3}
		\cmmidrule{6-8}
		$\SO[\abs{\prt}]$
		&
		&
		$\one$ 
		&
		&
		& $\crossmark$ & $\checkmark$ 
		&
		\ref{sec:isoflat}
		\\
		\mmidrule
		Grassmannian
		&
		&
		$\group{S}\paren[\big]{\OO[k] \times \OO[d-k]}$ 
		&
		& $ \delta P \mul  P -  P \mul \delta P$ 
		& $\checkmark$ & $\crossmark$ 
		&
		\ref{sec:grassmannian}
		\\
		Projective
		&
		&
		$\SO[d-1]$ 
		&
		&
		&
		&
		\\
		\midrule
		SPD Matrices
		&
		$\GL$
		& 
		$\OO$
		& $A \mul P \mul A\tp$
		& $ \frac{1}{2}\delta P \mul P\inv$
		& $\checkmark$ & $\crossmark$ 
		&
		\ref{sec:polar}
		\\
		\midrule
	$\symgrp$
	&
	$\symgrp$
	&
	$\one$
	& $\bar{g} \mul g$ & $ \conn_{+} = \phantom{-}\delta g \mul g \inv$ & $\crossmark$ & $\checkmark$ 
	&
	\ref{sec:liegroupact}
	\\
	\cmmidrule{4-5}
	 &  &  & $g \mul \bar{g}\inv$ & $ \conn_{-} = -g \inv \mul \delta g$ & $\crossmark$ & $\checkmark$ 
	 &
	 \\
		\midrule
		$\symgrp$ & $\symgrp\times\symgrp$ & $\symgrp$ & $g_1 \mul g \mul g_2\inv$ & $ \paren{ \conn_{+}, \conn_{-}}/2$& $\checkmark$ & $\crossmark$ 
		&
		\ref{sec:cartanschouten}
		\\
		\cmmidrule{5-7}
		& & & & $(0,\conn_{-})$ & $\crossmark$ & $\checkmark$ 
		&
		\\
		\cmmidrule{5-7}
		& & & & $(\conn_{+},0) $ & $\crossmark$ & $\checkmark$ 
		&
		\\
	 \midrule
	\begin{tabular}{l}Fixed\\ rank\\ matrices\end{tabular}
	% $\begin{aligned} &\paren[\big]{\GL[n-k]\times\GL[k]}\\
		% &\ltimes \RR^{k\times(m-k)}\\
		% &\times \paren[\big]{\GL[n-k]\times\GL[k]}\\
	% &\ltimes \RR^{k\times(n-k)}\end{aligned}$ 
	&
	$\symgrp = \GL[m]\times\GL[n]$
	&
		$\paren*{
	\begin{bmatrix}
	A_{1} & 0	 \\
	A_{2} & C
	\end{bmatrix}
	,
	\begin{bmatrix}
		B_{1} & 0 \\
		 B_{2} & C
	\end{bmatrix}
	} \in \symgrp$
	&
	$A \mul M \mul B\inv$ &
	$\emptyset$ 
	&
	$\emptyset$ &  $\emptyset$
	&
	\ref{sec:fixedrank}
	\\
		\bottomrule
	\end{tabular}
	\captionof{table}{
		A summary of the connections (or absence thereof) described in this paper.
	% We use a number of implicit notations: $\prt$ is a partition, $R$ is an orthogonal matrix, $P$ is a symmetric matrix, $A$ and $B$ are invertible matrices, and $M$ is an arbitrary matrix.
	% The sphere case is a particular case of a Stiefel case.
	% The Grassmannian case is a particular case of an isospectral manifold, with eigenvalues zero and one.
	% The projective case is a special case of the Grassmannian case.
	We refer to the respective section in the last column for the notations used in this table.
	}
	\label{tab:homogconn}
\end{landscape}
	\clearpage
}

\subsection{Lie Group as Homogeneous Spaces}

A Lie group can be regarded as a homogeneous space in at least three ways, which we now describe.

\subsubsection{Left and Right Actions}
\label{sec:liegroupact}

The Lie group acts transitively on itself by \emph{left multiplication}:
\begin{align}
g \act g' \coloneqq g \mul g' \qquad
g,g' \in \symgrp
.
\end{align}
The isotropy group in that case is trivial, so the only reductive  structure is $\redalg = \symalg$.
The corresponding flat  connection is the \demph{Maurer--Cartan form}~\cite{Ca10}.
It is defined by
\begin{align}
	\label{eq:defmaurercartan}
	\pairing{\conn_+}{\delta g}_g \coloneqq \delta g \mul g\inv
	.
\end{align}
Note that this connection is symmetric if and only if $\symgrp$ is commutative.

Of course, there is also the corresponding right multiplication action
\begin{align}
g \act g' \coloneqq g'  \mul g\inv
,
\end{align}
for which the flat  connection is
\begin{align}
	\label{eq:defmaurercartanminus}
	\pairing{\conn_{-}}{\delta g}_g \coloneqq -g\inv \mul \delta g
	.
\end{align}

\subsubsection{Cartan--Schouten Action}
\label{sec:cartanschouten}

There is another way in which $\symgrp$ can be a homogeneous manifold~\cite{CaSc26}, \cite[\S\,X.2]{KoNo69}. % chktex 2
The symmetry group is now $\symgrp\times\symgrp$, acting on the manifold $\Man = \symgrp$ by
\begin{align}
	\label{eq:actcartanschouten}
(g_1,g_2) \act g \coloneqq g_1 \mul g \mul g_2\inv
.
\end{align}
We choose the origin at the group identity.
The isotropy group is then
\begin{align}
	\isogrp = \setc{(g,g)}{g \in \symgrp} \equiv \symgrp
	,
\end{align}
with corresponding Lie algebra
\begin{align}
	\isoalg = \setc{(\xi,\xi)}{\xi \in \symalg}
	.
\end{align}

Define (with a slight abuse of the notation~\eqref{eq:defmaurercartan}) the $\symalg \times \symalg$-valued one-form
\begin{align}
	\pairing{\conn_+}{\delta g} \coloneqq (\delta g \mul g \inv, 0)
	.
\end{align}

The infinitesimal action of $(\xi,\zeta)\in\symalg\times\symalg$ on $g\in\symgrp$ is given by $\xi \mul g - g \mul \zeta$, so we obtain consistency at the origin.
For equivariance, we check that
\begin{align}
	\pairing{\conn_+}{g_1 \mul \delta g \mul g_2\inv}_{g_1 \mul g \mul g_2\inv} &= (g_1 \mul \delta g \mul g_2\inv (g_1\mul g \mul g_2\inv)\inv, 0) \\
	&= (g_1 \mul \delta g \mul g\inv \mul g_1\inv, 0) \\
	&= (g_1,g_2) \act (\delta g \mul g\inv, 0)
	.
\end{align}
This shows, using \autoref{prop:formula}, that $\conn_{+}$ is a connection and that
\(
	\redalg_+ \coloneqq (\symalg , 0)
\)
is a reductive structure.
The connection $\conn_{+}$ is flat, because $\redalg_{+}$ is isomorphic to the Lie algebra $\symalg$.
It is thus symmetric if and only if $\symgrp$ is commutative.

There is also a corresponding connection
\begin{align}
	\pairing{\conn_-}{\delta g} \coloneqq (0,  -g \inv \mul \delta g)
\end{align}
associated to the reductive structure $\redalg_{-} = (0, \symalg)$.

As the set of connections is an affine space, the \emph{mean value} \begin{align}\conn_0 = (\conn_+ + \conn_-)/2\end{align} of those two connections is also a connection.
It also happens to be a \emph{symmetric} connection.
Indeed, the associated subspace is \(\redalg_0 = \conn_0(\Tan[\origin] \symgrp) = \setc{(\xi, -\xi)}{\xi \in \symalg}\), and it is easy to check that \([\redalg_0, \redalg_0] \subset \isoalg\), showing that the connection is symmetric.
The connection \(\conn_0\) is thus flat if and only if \(\symgrp\) is commutative.

We have seen that, in this case, there are at least two reductive structures (so the dimension of the reductive structure space is at least two), but there may be more~\cite{Be72}.

\subsection{Affine spaces}
\label{sec:affine}

We define an \demph{affine space} as the manifold $\Man \equiv \RR^d$ and the group $\symgrp = \isogrp\ltimes\RR^d$, where $\isogrp$ is a subgroup of $\GL$.
We represent an element of $\symgrp$ by
\begin{align}
g = \affgrp{h}{a}
,
\qquad
h\in\isogrp
,
\quad
a \in \RR^d
,
\end{align}
acting on an element
\begin{align}
\begin{bmatrix}
x \\ 1	
\end{bmatrix}
\end{align}
by matrix multiplication.
The action is thus
\begin{align}
g\act x = h\mul x + a
.
\end{align}
We choose the origin at zero, i.e., $\origin = 0$, and the isotropy group is then $\isogrp$.

On an affine space, there is always an ``obvious'' connection given by translations.

\begin{proposition}
\label{prop:affineconnection}
	In any affine space \(\isogrp \ltimes \RR^d\) there is a connection given by
	\begin{align}
		\label{eq:affconn}
		\pairing{\conn}{\delta x}_x = \begin{bmatrix}
			0 & \delta x \\ 0 & 0
		\end{bmatrix}
		.
	\end{align}
\end{proposition}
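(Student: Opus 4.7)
The plan is to invoke \autoref{prop:formula}, which reduces the claim to two elementary checks at the origin: that \(\conn\) is consistent at \(\origin\), and that \(\conn\) is \(\symgrp\)-equivariant. Both amount to short block-matrix computations in the affine embedding, so I would dispatch them in turn.

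First I would verify consistency at \(\origin = 0\). The infinitesimal action of \(\affalg{\xi}{v} \in \symalg\) on a point \(x \in \RR^d\) is \(\xi x + v\), which at \(x = 0\) simplifies to \(v\). Specializing \eqref{eq:affconn} at the origin yields \(\pairing{\conn}{f}_{\origin} = \affalg{0}{f(\origin)}\), whose infinitesimal action at \(\origin\) is precisely \(f(\origin)\), as required.

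Next I would check \(\symgrp\)-equivariance. For \(g = \affgrp{h}{a}\), the Jacobian \(\Tan g\) is the constant linear map \(h\), so \((g\act f)(g\act x) = h\mul f(x)\). By \eqref{eq:affconn}, the left-hand side of the equivariance identity \eqref{eq:connequi} then equals \(\affalg{0}{h\mul f(x)}\). For the right-hand side I would compute the adjoint action \(g\mul\affalg{0}{f(x)}\mul g\inv\) by a direct \(2\times 2\) block multiplication, which produces exactly the same element: the translation component transforms by \(v \mapsto h\mul v\), while the upper-left block stays zero and the translation piece \(a\) drops out. The matching of the two sides is essentially forced by the block structure of \(\symalg\), so I do not foresee a real obstacle here; the only care needed is keeping the affine embedding consistent on both sides.

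With both hypotheses verified, \autoref{prop:formula} delivers the conclusion that \(\conn\) is a connection, and identifies the associated reductive subspace as
\[
\redalg = \conn\paren{\Tan[\origin]\Man} = \setc{\affalg{0}{v}}{v \in \RR^d}.
\]
As a by-product, since \(\redalg\) is abelian, the resulting connection is simultaneously flat and symmetric, consistent with the entry for affine spaces in \autoref{tab:homogconn}.
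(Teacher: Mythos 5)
Your proof is correct and follows exactly the route the paper intends: the proposition is stated without an explicit proof, and the surrounding text sets up \autoref{prop:formula} precisely so that examples like this one reduce to checking consistency at the origin and $\symgrp$-equivariance, both of which you carry out correctly (the adjoint action indeed kills the translation part $a$ and sends $\affalg{0}{v}$ to $\affalg{0}{h\mul v}$). Your closing remark that $\redalg = \setc{\affalg{0}{v}}{v\in\RR^d}$ is abelian, hence the connection is flat and symmetric, also matches the paper's \autoref{tab:homogconn}.
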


We now study under which conditions the connection~\eqref{eq:affconn} is the only possible one.

\begin{lemma}
	Consider the semidirect product \(\symgrp = \isogrp \ltimes \RR^d\), where $\isogrp \subset \GL$.
	For any linear map
	\begin{align}
		\alpha \colon \RR^d \to \isoalg	
	\end{align}
	such that
	\begin{align}
		\label{eq:cocycle}
	[A, \alpha(x)] = \alpha(A \mul x)
	,
	\qquad
	A \in \isoalg
	,
	\quad
	x \in \RR^d
	,
	\end{align}
	the subspace
	\begin{align}
		\label{eq:redalgcocycle}
		\redalg = \setc*{\begin{bmatrix}\alpha(x)& x \\ 0&0\end{bmatrix}}{x \in \RR^d}
	\end{align}
	is a reductive structure.
	Conversely, for any reductive structure, there is a linear map $\alpha$ fulfilling~\eqref{eq:cocycle}, such that $\redalg$ is defined by~\eqref{eq:redalgcocycle}.
\end{lemma}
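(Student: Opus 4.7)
The plan is to parametrize all vector-space complements to $\isoalg$ in $\symalg$ by linear maps $\alpha\colon\RR^d\to\isoalg$, and then translate the reductivity condition into the cocycle identity~\eqref{eq:cocycle} by a direct bracket computation.

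First, I would identify an element of $\symalg$ with the matrix $\affalg{A}{x}$, where $A\in\isoalg$ and $x\in\RR^d$. The isotropy subalgebra $\isoalg$ sits inside $\symalg$ as the matrices $\affalg{A}{0}$, and the quotient $\symalg/\isoalg$ is identified with $\RR^d$ via $\affalg{A}{x}+\isoalg\mapsto x$. Any vector-space complement $\redalg$ of $\isoalg$ is therefore the image of a unique linear section $\RR^d\to\symalg$, and must consequently take the form $x\mapsto\affalg{\alpha(x)}{x}$ for a uniquely determined linear map $\alpha\colon\RR^d\to\isoalg$. This already recovers the shape of~\eqref{eq:redalgcocycle} and establishes a bijection between complements of $\isoalg$ and linear maps $\alpha$.

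Next, I would check when the complement $\redalg$ so produced is reductive, i.e.\ when $[\isoalg,\redalg]\subset\redalg$. A direct matrix computation yields
\begin{align*}
	\left[\affalg{A}{0},\affalg{\alpha(x)}{x}\right]=\affalg{[A,\alpha(x)]}{A\mul x}
\end{align*}
for $A\in\isoalg$ and $x\in\RR^d$. Because every element of $\redalg$ has the unique form $\affalg{\alpha(y)}{y}$, requiring this commutator to lie in $\redalg$ forces first $y=A\mul x$ and then $[A,\alpha(x)]=\alpha(A\mul x)$, which is exactly the cocycle condition~\eqref{eq:cocycle}. Both directions of the lemma follow at once from this equivalence.

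The main obstacle is of minor substance: the paper's default definition of reductivity is at the group level, $\isogrp\act\redalg\subset\redalg$. Translating this to the infinitesimal condition used in~\eqref{eq:cocycle} either requires connectedness of $\isogrp$, or one works throughout with the integrated cocycle $h\mul\alpha(x)\mul h\inv=\alpha(h\mul x)$ for $h\in\isogrp$ and differentiates in $h$ to recover~\eqref{eq:cocycle}. Either route leaves the parametrization and the bijection above unchanged.
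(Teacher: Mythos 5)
Your proof is correct and follows essentially the same route as the paper's: parametrize a complement of $\isoalg$ by a linear map $\alpha\colon\RR^d\to\isoalg$, compute the commutator $\bigl[\affalg{A}{0},\affalg{\alpha(x)}{x}\bigr]=\affalg{[A,\alpha(x)]}{A\mul x}$, and read off the cocycle condition from membership in $\redalg$. Your closing caveat about the group-level versus infinitesimal formulation of reductivity is well taken, and the paper addresses it in the remark immediately following its definition of a reductive decomposition.
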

\begin{proof}
	An element of $\isoalg$ can be written as
	\begin{align}
		\xi = \begin{bmatrix}
	A & 0 \\ 0 & 0
		\end{bmatrix}
		,
		\qquad
		A \in \isoalg
		.
	\end{align}
	Suppose that $\redalg$ is a reductive structure.
	It is parameterised by $x \in \RR^d$ as
	\begin{align}
		\zeta = \begin{bmatrix}
		\alpha(x) & x \\ 0 & 0
		\end{bmatrix}
	\end{align}
	for some linear function $\alpha$.
	We compute the commutator
	\begin{align}
	[\xi, \zeta] = \begin{bmatrix}
		[A, \alpha(x)] & A \mul x \\ 0 & 0
	\end{bmatrix}
	.
	\end{align}
	As $\redalg$ is reductive, we obtain $[A, \alpha(x)] = \alpha(A \mul x)$.
	It is straightforward to check the opposite statement.
\end{proof}

We now obtain the following uniqueness result if the isotropy group contains scalings.
Note how the presence of scalings in the isotropy group also simplified the study of $\isogrp$-invariant spaces in~\cite[\S\,6]{MKVe13}.

\begin{proposition}
\label{prop:scalred}
	If the isotropy group contains scalings, then the only reductive structure is $\redalg = \RR^d$.
	The only connection is then given by~\eqref{eq:affconn}.
\end{proposition}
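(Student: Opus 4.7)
The plan is to invoke the preceding lemma, which completely classifies reductive structures in an affine space by linear maps $\alpha \colon \RR^d \to \isoalg$ satisfying the cocycle condition
\begin{align}
[A, \alpha(x)] = \alpha(A \mul x), \qquad A \in \isoalg, \quad x \in \RR^d.
\end{align}
Under this classification, the connection \eqref{eq:affconn} corresponds to $\alpha \equiv 0$. So the content of the proposition reduces to: \emph{the only $\alpha$ fulfilling the cocycle condition is $\alpha = 0$}, provided the isotropy group contains scalings.

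The key observation is that if $\isogrp$ contains the one-parameter subgroup of scalings $t \mapsto e^t I$, then its Lie algebra $\isoalg$ contains the identity matrix $I \in \gl$. I would then specialise the cocycle condition \eqref{eq:cocycle} to $A = I$. The left-hand side $[I, \alpha(x)]$ vanishes since $I$ is central in $\gl$, while the right-hand side is $\alpha(I \mul x) = \alpha(x)$. Hence $\alpha(x) = 0$ for every $x \in \RR^d$, i.e., $\alpha \equiv 0$.

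It then follows from the lemma that the unique reductive decomposition is
\begin{align}
\redalg = \setc*{\begin{bmatrix}0 & x \\ 0 & 0\end{bmatrix}}{x \in \RR^d},
\end{align}
which, via the affine bijection of \autoref{prop:reducaff} and \autoref{prop:connequi}, corresponds to the connection \eqref{eq:affconn}. There is no real obstacle here: the cocycle condition is strong enough that a single well-chosen element of $\isoalg$ forces $\alpha$ to vanish. The only mild subtlety is being explicit that ``contains scalings'' is meant in the sense that $I \in \isoalg$, which is an immediate consequence of the one-parameter subgroup $\setc{e^t I}{t \in \RR}$ lying in $\isogrp$.
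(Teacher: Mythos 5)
Your proposal is correct and follows essentially the same route as the paper: the paper also notes that $\Id \in \isoalg$ and plugs $A = \Id$ into the cocycle condition \eqref{eq:cocycle} to conclude $\alpha = 0$, hence $\redalg = \RR^d$. Your write-up just spells out the centrality of $\Id$ and the passage back to the connection via the affine bijections a bit more explicitly.
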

\begin{proof}
	If $\gl[1] \subset \isoalg$, it means that $\Id \in \isoalg$.
	By using~\eqref{eq:cocycle} with $A = \Id$, we obtain that $\alpha(x) = 0$, so $\redalg = \RR^d$.
\end{proof}

\subsection{Stiefel Manifolds and spheres}
\label{sec:stiefel}

For integers $k \leq n$, the \demph{Stiefel manifold} is the set of $n \times k$ matrices $Q$ with orthogonal columns, i.e.,
\begin{align}
	\label{eq:defstiefel}
Q\tp \mul Q = 1
.
\end{align}
An element $R \in \SO[n]$ acts on $Q$ by 
\begin{align}
	R\act Q \coloneqq R \mul Q
	.
\end{align}
This action is transitive.
If we define the origin to be the matrix (in block notation)
\begin{align}
	\label{eq:stiefelorigin}
	Q_0 \coloneqq \begin{bmatrix}
		0 \\ \one
	\end{bmatrix}
\end{align}
then the isotropy group consists of the matrices
\begin{align}
	\isogrp = \begin{bmatrix}
		R & 0 \\ 0 & \one
	\end{bmatrix} 
	,
	\qquad
	R \in \SO[n-k]
	.
\end{align}
So, as is customary, we identify $\isogrp \equiv \SO[n-k]$.
The Lie algebra $\symalg$ is $\so[n]$, the space of skew symmetric matrices.

A candidate for a reductive decomposition is the space $\redalg$ consisting of matrices of the form
\begin{align}
	\label{eq:defstiefelred}
	\begin{bmatrix}
		0 & W \\ -W\tp & \Omega
	\end{bmatrix}
	,
	\qquad
	W \in \RR^{(n-k)\times k} 
	,
	\quad
	\Omega \in \so[k]
	.
\end{align}

\begin{proposition}
The space $\redalg$ is a reductive structure.
The associated connection defined at a vector $\delta Q$ on a point $Q$ is
\begin{align}
	\label{eq:stiefelconn}
	\pairing{\conn}{\delta Q}_{Q} =   {\delta Q} \mul Q\tp -Q \mul {\delta Q}\tp - Q \mul \delta Q\tp \mul Q \mul Q\tp
	.
\end{align}
The connection is symmetric if and only if $k=1$, i.e., if the Stiefel manifold is a sphere, in which case the connection simplifies into
\begin{align}
	\pairing{\conn}{\delta Q}_{Q} =   {\delta Q} \mul Q\tp -Q \mul {\delta Q}\tp 
	.
\end{align}
\end{proposition}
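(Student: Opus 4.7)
The plan is to invoke \autoref{prop:formula}: by showing that the one-form \eqref{eq:stiefelconn} is $\symgrp$-equivariant, skew-symmetric, and consistent at the origin $Q_0 = \begin{bmatrix} 0 \\ \one \end{bmatrix}$, and by reading off its image at $Q_0$ as the subspace \eqref{eq:defstiefelred}, one obtains simultaneously that $\redalg$ is a reductive complement to $\isoalg$ and that \eqref{eq:stiefelconn} is the associated connection. The symmetry claim is then an independent commutator computation in $\redalg$.

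Before touching \eqref{eq:stiefelconn} I would set up the block decomposition of $\symalg = \so[n]$: any skew matrix can be written
\begin{align*}
\xi = \begin{bmatrix} A & B \\ -B\tp & C \end{bmatrix}, \qquad A \in \so[n-k],\; C \in \so[k],\; B \in \RR^{(n-k)\times k},
\end{align*}
where the $\isoalg$-part corresponds to $B = C = 0$ and the $\redalg$-part to $A = 0$; this gives $\symalg = \isoalg \oplus \redalg$. A short block computation of $[A,(W,\Omega)]$ shows that its upper-left block vanishes for any $A \in \so[n-k]$, so $[\isoalg,\redalg] \subset \redalg$. Equivariance of \eqref{eq:stiefelconn} under $Q \mapsto RQ$, $\delta Q \mapsto R\delta Q$ with $R \in \SO[n]$ is automatic: every factor $Q\tp$ meets a neighbouring $R\tp R = \one$, leaving the expression conjugated by $R$, which is exactly the adjoint action on $\so[n]$. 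Skew-symmetry of the output follows from a direct $\omega + \omega\tp = 0$ check, substituting $Q\tp Q = \one$ and the tangent identity $\delta Q\tp Q = -Q\tp\delta Q$ where needed.

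Consistency at the origin is the main block calculation: decomposing $\delta Q = \begin{bmatrix} V \\ S \end{bmatrix}$ with $S \in \so[k]$ (forced by $Q_0\tp\delta Q + \delta Q\tp Q_0 = 0$), substituting into \eqref{eq:stiefelconn}, and simplifying with $Q_0 Q_0\tp = \begin{bmatrix} 0 & 0 \\ 0 & \one \end{bmatrix}$ should yield $\pairing{\conn}{\delta Q}_{Q_0} = \begin{bmatrix} 0 & V \\ -V\tp & S \end{bmatrix}$. This lies in $\redalg$ and satisfies $\pairing{\conn}{\delta Q}_{Q_0} \mul Q_0 = \delta Q$, closing the connection argument via \autoref{prop:formula}. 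For the symmetry statement, I would compute $[r_1,r_2]$ for generic $r_i = \begin{bmatrix} 0 & W_i \\ -W_i\tp & \Omega_i \end{bmatrix} \in \redalg$: its upper-right block is $W_1\Omega_2 - W_2\Omega_1$ and its lower-right block is $W_2\tp W_1 - W_1\tp W_2 + [\Omega_1,\Omega_2]$. For $k=1$ both $\Omega_i = 0$ (since $\so[1] = 0$) and $W_1\tp W_2$, $W_2\tp W_1$ coincide as scalars, so $[r_1,r_2] \in \isoalg$ and the connection is symmetric. For $k \geq 2$ one can pick $\Omega_i = 0$ and $W_1, W_2$ with $W_1\tp W_2$ not symmetric to produce a non-zero lower-right block, so symmetry fails. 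The sphere-case simplification is immediate: when $k=1$, $\delta Q\tp Q$ is a scalar equal to zero by the tangent condition, so the last term of \eqref{eq:stiefelconn} vanishes identically.

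The main bookkeeping obstacle is the consistency expansion at $Q_0$: careful sign-tracking is essential when combining $\delta Q\tp Q = -Q\tp\delta Q$ with the projector $Q_0 Q_0\tp$ (which is not the identity but projects onto the last $k$ coordinates), since stray factors in the lower-right block are easy to accumulate.
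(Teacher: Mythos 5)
Your overall route coincides with the paper's: check that the one-form is $\so[n]$-valued, $\SO[n]$-equivariant and consistent at the origin, read off $\redalg$ as the image of $\conn_{\origin}$, conclude via \autoref{prop:formula}, and settle symmetry by a block commutator computation in $\redalg$. The equivariance and skew-symmetry checks are correct, and your symmetry argument is in fact slightly more complete than the paper's, since you also inspect the lower-right block of $[r_1,r_2]$ and not only the off-diagonal one.

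The genuine gap sits exactly at the step you postpone as ``the main bookkeeping obstacle'': the consistency expansion at $Q_0$. Carrying it out with $\delta Q = \begin{bmatrix} V \\ S\end{bmatrix}$, $S\tp = -S$, the three terms of \eqref{eq:stiefelconn} are
\begin{align}
\delta Q \mul Q_0\tp = \begin{bmatrix} 0 & V \\ 0 & S \end{bmatrix},
\qquad
- Q_0 \mul \delta Q\tp = \begin{bmatrix} 0 & 0 \\ -V\tp & -S\tp \end{bmatrix},
\qquad
- Q_0 \mul \delta Q\tp \mul Q_0 \mul Q_0\tp = \begin{bmatrix} 0 & 0 \\ 0 & -S\tp \end{bmatrix},
\end{align}
whose sum is $\begin{bmatrix} 0 & V \\ -V\tp & 3S \end{bmatrix}$, not the $\begin{bmatrix} 0 & V \\ -V\tp & S \end{bmatrix}$ you assert. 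Equivalently, at a general point the printed formula gives $\omega \mul Q = \delta Q - 2\, Q \mul \delta Q\tp \mul Q$, so consistency fails whenever $\delta Q\tp \mul Q \neq 0$, i.e., for every $k \geq 2$. The cure is the sign of the last term: the correct expression is $\delta Q \mul Q\tp - Q \mul \delta Q\tp + Q \mul \delta Q\tp \mul Q \mul Q\tp$ (equivalently $- Q \mul Q\tp \mul \delta Q \mul Q\tp$), which is the Celledoni--Owren form and does yield $\begin{bmatrix} 0 & V \\ -V\tp & S \end{bmatrix}$ at the origin. So the computation you deferred is not mere sign-tracking; it is the one step that detects that the displayed formula must be corrected, and asserting its outcome without performing it is precisely where your argument breaks down (the paper's own one-line assertion of consistency glosses over the same point). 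Once the sign is fixed, the rest of your proposal goes through unchanged; in particular, for $k=1$ nothing is affected because $\delta Q\tp \mul Q$ is then a $1\times 1$ skew matrix and the disputed term vanishes identically.
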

\begin{proof}
	By differentiating~\eqref{eq:defstiefel} we obtain that a tangent vector $\delta Q$ at $Q$ satisfies
	\begin{align}
		\label{eq:stiefelortho}
		\delta Q\tp \mul  Q  + Q\tp \mul \delta Q = 0
		.
	\end{align}
	This shows that $\conn$ takes its value in $\symalg = \so[n]$.

	Recalling the action of $\SO[n]$ on the manifold by left multiplication, the lifted action on tangent vectors is also by left multiplication (as the action is linear).
	Now, for any matrix $R\in\SO[n]$, we have
	\begin{align}
		\pairing{\conn}{R \act \delta Q}_{R \act Q} = R \mul \pairing{\conn}{\delta Q}_Q \mul R\tp = R \act \pairing{\conn}{\delta Q}_Q
		,
	\end{align}
	which shows the equivariance of $\conn$.

	At the origin $Q_0$, 
	the infinitesimal action of 
\begin{align}
		\xi = \begin{bmatrix}
			\star & W \\
		W\tp & \Omega
	\end{bmatrix}
	\in\symalg
\end{align}
is 
	\begin{align}
		\xi \act Q_0 = \begin{bmatrix}
			W \\ \Omega
		\end{bmatrix}
		.
	\end{align}
	The connection sends that vector to
	\begin{align}
		 \begin{bmatrix}
		0 & W \\
		W\tp & \Omega
	\end{bmatrix}
	,
	\end{align}
	from which consistency follows.

	Finally, the image $\redalg = \conn(\Tan[\origin]\Man)$ consists of matrices of the form~\eqref{eq:defstiefelred}.
	We conclude using \autoref{prop:formula}.

	Finally, computing an extra diagonal term of the commutator between two elements of $\redalg$ shows that it is zero if and only if the lower right term is zero, i.e., $\Omega = 0$.
	This happens only if $k=1$.
Finally, in this case, the orthogonality condition~\eqref{eq:stiefelortho} becomes $ \delta Q\tp \mul  Q = 0$, which accounts for the simplification in the connection formula.
\end{proof}

Note that the expression~\eqref{eq:stiefelconn} is also obtained in~\cite{CeOw03}, although the authors do not mention the equivariance of that connection.

We also notice that if $Q'$ is orthogonal to both $Q$ and $\delta Q$, then the vector field at $Q'$ is zero, that is
\begin{align}
	\pairing{\conn}{\delta Q}_{Q} \act Q' = 0
	.
\end{align}
This is especially intuitive on \emph{spheres}, where it means that the (generalised) axis of rotation is orthogonal to the plane spanned by $Q$ and $\delta Q$.

\subsection{Isospectral Flows, Lax Pairs, Grassmannians and Projective Spaces}
\label{sec:isospectral}

An isospectral manifold is the space of symmetric matrices with a prescribed spectrum~\cite{CaIsZa97}.
As we shall see, they contain as special cases Grassmannians (and thus projective spaces), as well as principal homogeneous spaces for the rotation group, in which case they are often expressed in the form of a Lax pair.

In order to define the isospectral manifold, we first define a {partition} of an integer $d$, representing eigenvalue multiplicities.
We define a \demph{partition} to be a function $\prt \colon \NN \to \NN$, which is \emph{non increasing}, and eventually zero.
The \demph{size} of the partition is
\begin{align}
	\abs{\prt} = \sum_{i=0}^{\infty} \prt_i
\end{align}
which is a finite integer.

We define the \demph{isospectral manifold} $\Man$ associated to a partition $\prt$ and a sequence of (necessarily distinct) real eigenvalues $\lambda_i$ as
\begin{align}
	\Man \coloneqq \setc[\Big]{P \in \RR^{\abs{\prt} \times \abs{\prt}}}{P = P\tp; \quad \text{$P$ has eigenvalue $\lambda_i$ with multiplicity $\prt_i$}}
	.
\end{align}

The \demph{length} of the partition indicates the number of distinct eigenvalues.
It is defined as the number of non zero elements in $\prt$, i.e.
\(
	 \#\setc{i\in\NN}{\prt_i \neq 0}
	.
\)

Examples of partitions are $\kappa = (1)$ of length and size one, $\kappa = (3,2)$ of size five and length two.
The partitions associated to a \demph{Grassmannian} have length two, such as $(m,n)$, with two arbitrary positive integers $m$, $n$ such that $m \geq n$.
The \demph{projective space} case corresponds to the special case $n = 1$.

To a given partition $\prt$ we associate a block matrix representation where the block $i,j$ has size $\prt_i \times \prt_j$.
Note that the size of the partition gives the size of the matrices, whereas the length of the partition is the number of blocks.

	In what follows, we exclude the trivial case of $\prt$ having length one, in which case the manifold $\Man$ reduces to one point.

The group $\SO[\abs{\prt}]$ acts on $P$ by adjunction
\begin{align}
	\label{eq:actiso}
R \act P = R \mul P \mul R\tp
.
\end{align}
We can define the origin $\Delta$ to be the diagonal matrix with eigenvalues $\lambda_i$ with multiplicity $\prt_i$.
As symmetric matrices are diagonalisable with orthogonal matrices, we obtain that any matrix in $P \in \Man$ can be written as $P = R\mul \Delta \mul R\tp$ for some $R\in\SO[\abs{\prt}]$.
This shows that $\SO[\abs{\prt}]$ acts transitively on $\Man$.
The isotropy subgroup $\isogrp$ is then the set of block matrices of the form
\begin{align}
	\label{eq:typicalspeciso}
\begin{bmatrix}
	R_1 & & \\
	& R_2 &\\
	&& \ddots
\end{bmatrix}
\end{align}
such that $R_i \in \OO[\prt_i]$, and such that the determinant is one.
We thus denote the isotropy group by
\begin{align}
	\isogrp = \SO[\prt] \coloneqq \group{S}\paren[\Big]{\prod_{i\in\NN} \OO[\prt_i]}
	.
\end{align}
Note that, since the isotropy group does not depend on the eigenvalues, but only on their multiplicity, this shows that all the isospectral manifolds with the same partition (but possibly different eigenvalues) are isomorphic.

A good candidate for the reductive structure $\redalg$ is the subspace of $\so[\abs{\prt}]$ which is zero on the block diagonal:
\begin{align}
\begin{bmatrix}
	0 && \\
	& 0 & \\
	&& \ddots \\
\end{bmatrix}
\in \so[\abs{\prt}]
.
\end{align}

\begin{proposition}
\label{prop:isospectral}
	The space $\redalg$ is a reductive structure.
	It further induces
	\begin{enumerate}[label=\upshape(\roman*)]
		\item a \emph{flat} connection if and only if $\prt = (1,1,\ldots,1)$, in which case it is a principal homogeneous space for $\SO[d]$.
		\item a \emph{symmetric} connection if and only if $\prt = (k,d-k)$, in which case $\Man$ is isomorphic to a Grassmannian.
			The corresponding connection is given explicitly by
			\begin{align}
				\label{eq:grassconn}
				\pairing{\conn}{\delta P}_P \coloneqq \frac{1}{\paren{\Delta \lambda}^2}\paren{\delta P \mul P - P \mul \delta P}
				,
			\end{align}
			where $\Delta \lambda$ is the difference between the two eigenvalues at hand.
	\end{enumerate}
\end{proposition}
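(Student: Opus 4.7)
The plan is to handle the three parts in order: first, that $\redalg$ is a reductive complement; second, the characterisation of flatness; and third, the characterisation of symmetry together with the explicit formula \eqref{eq:grassconn}.

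Reductivity reduces to a direct block computation. Every element of $\symalg=\so[\abs{\prt}]$ splits uniquely into its block-diagonal part, lying in $\isoalg$, and its block-off-diagonal part, lying in $\redalg$, so $\symalg=\isoalg\oplus\redalg$. For the bracket inclusion $[\isoalg,\redalg]\subset\redalg$, if $A\in\isoalg$ and $B\in\redalg$ then the $(i,i)$-block of $[A,B]$ equals $A_{ii}B_{ii}-B_{ii}A_{ii}=0$ since $B_{ii}=0$, so the diagonal blocks of $[A,B]$ all vanish and $[A,B]\in\redalg$.

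For flatness and symmetry I would expand the commutator $[B,C]$ of two elements of $\redalg$ block by block: the $(i,j)$-block with $i\neq j$ is $\sum_{k\neq i,j}(B_{ik}C_{kj}-C_{ik}B_{kj})$, and the $(i,i)$-block is $\sum_{k\neq i}(B_{ik}C_{ki}-C_{ik}B_{ki})$. Symmetry $[\redalg,\redalg]\subset\isoalg$ demands that off-diagonal blocks vanish; this holds automatically when $\prt$ has length two, as no $k$ distinct from both $i,j$ is available, and fails in length at least three by choosing $B$ supported on blocks $(1,2),(2,1)$ and $C$ on $(2,3),(3,2)$, which produces a nonzero $(1,3)$-block $B_{12}C_{23}$. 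Flatness $[\redalg,\redalg]\subset\redalg$ demands that diagonal blocks vanish; when every $\prt_i=1$ the entries are scalars and the skew relations $B_{ki}=-B_{ik}$, $C_{ki}=-C_{ik}$ collapse the diagonal sum to zero, while if some $\prt_i\geq 2$ one constructs $B,C$ supported on blocks $(i,j),(j,i)$ with rank-one upper blocks whose commutator has a nonzero skew $(i,i)$-block in $\isoalg\setminus\redalg$.

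Finally, to verify that \eqref{eq:grassconn} defines the connection on a Grassmannian, I would invoke \autoref{prop:formula}. Equivariance under conjugation by $\SO[\abs{\prt}]$ is manifest, since $(R\act\delta P)\mul(R\act P)-(R\act P)\mul(R\act\delta P)=R\mul(\delta P\mul P-P\mul\delta P)\mul R\tp$, while $(\Delta\lambda)^2$ is a scalar invariant. At the origin $\Delta=\mathrm{diag}(\lambda_1\one_k,\lambda_2\one_{d-k})$, tangent vectors take the block-antidiagonal form $\delta P=\begin{bmatrix}0&X\\X\tp&0\end{bmatrix}$, and a direct computation gives $\delta P\mul\Delta-\Delta\mul\delta P=(\lambda_2-\lambda_1)\begin{bmatrix}0&X\\-X\tp&0\end{bmatrix}\in\redalg$, so the normalisation by $(\Delta\lambda)^2$ picks out an element $\xi\in\redalg$ whose infinitesimal action $\xi\mul\Delta-\Delta\mul\xi$ reproduces $\delta P$ exactly. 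The main obstacle is disciplined bookkeeping of block indices in the two counterexamples for non-symmetry and non-flatness; the rest is routine block algebra.
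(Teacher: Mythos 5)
Your proposal is correct and follows essentially the same route as the paper: a direct block computation for the decomposition $\symalg=\isoalg\oplus\redalg$ and the bracket inclusion, inspection of the off‑diagonal (resp.\ diagonal) blocks of $[\redalg,\redalg]$ with explicit counterexamples when the length exceeds two (resp.\ when some $\prt_i\geq 2$), and verification of \eqref{eq:grassconn} via equivariance plus consistency at the origin using \autoref{prop:formula}. The only cosmetic difference is that you check consistency by mapping $\delta P\mapsto\xi\mapsto\xi\act\Delta$ directly, whereas the paper runs the computation in the other direction ($m\mapsto[m,\Delta]\mapsto(\Delta\lambda)^2 m$); the content is identical.
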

\newcommand*\mb{\bar{m}}
\begin{proof}
	We denote a generic element of $h \in \isogrp$ which is in the form~\eqref{eq:typicalspeciso}.
	A typical element of $\redalg$ can be written, using the same block conventions, 
	\begin{align}
	m = \begin{bmatrix}
		0 & m_{12} & \ldots \\
		m_{21} & 0 & \ldots \\
		\vdots &&
	\end{bmatrix}
	\end{align}
	with $m_{ij} + m_{ji}\tp = 0$.
	\begin{enumerate}
		\item
			First, a calculation shows that a product of an element $h\in\isoalg$ and an element $m\in\redalg$ is such that $h \mul m \in \redalg$, and $m \mul h \in \redalg$, so, in particular, the commutator $[h,m] \in \redalg$, which shows that $\redalg$ is a reductive structure.
		\item
The first extra diagonal term, of index $(2,1)$ in the product $m\mul \mb$ of two elements $m,\mb \in \redalg$ is
\begin{align}
	\underbrace{m_{11}}_{=0} \mul \mb_{12} + m_{12} \mul \underbrace{\mb_{22}}_{=0} + m_{13} \mul \mb_{32} + \cdots
\end{align}
This shows that if the length of the partition $\prt$ is two, then the product is zero at the extra diagonal, and in particular, the commutator is as well, so the connection is symmetric.
If the length is greater than two, the commutator contains at least the term
\begin{align}
m_{13} \mul \mb_{32} - \mb_{13} \mul m_{32}
,
\end{align}
so it is always possible to choose, for instance the blocks $m_{13}$ and $\mb_{32}$, such that this term does not vanishes.
This shows that the connection is not symmetric.
\item
An element in the block diagonal of the commutator $[m,\mb]$ can be written as
\begin{align}
	\sum_k -m_{ik} \mul \mb_{ik}\tp + \mb_{ik} \mul m_{ik}\tp
\end{align}
If all the blocks have size one, then this vanishes, showing that if all the eigenvalues have multiplicity one, the connection is flat.
Note that, in that case, the isotropy group is trivial, which shows that $\Man$ is a principal homogeneous space for $\symgrp = \SO[\abs{\prt}]$.
If a block of index $k$ has size greater than one, then one can choose $m_{ik}$ and $\mb_{ik}$ so that $-m_{ik}\mul\mb_{ik}\tp + \mb_{ik}\mul m_{ik}\tp \neq 0$.
\item
	The equivariance property of~\eqref{eq:grassconn} is straightforward to check.
	It now suffices to check consistency at the origin.
	Recall that we are considering the case
	\begin{align}
		\Delta = \begin{bmatrix}
			\lambda_1 & 0 \\ 0 & \lambda_2
		\end{bmatrix}
	\end{align}
		with the same block-matrix conventions as before, i.e., with respect to the partition $(k, d-k)$.
		From~\eqref{eq:actiso}, we obtain that the infinitesimal action of $\xi \in \so[d]$ at a point $P$ is given by
		\begin{align}
			\label{eq:actisoinf}
			\xi \act P = \xi \mul P - P \mul \xi
			.
		\end{align}
	A direct calculation shows that the commutator of a matrix $A$ with block entries $a_{ij}$ and $\Delta$ has block coefficients $\bracket{A, \Delta}_{ij} = a_{ij} (\lambda_j - \lambda_i)$.
	Now, an element of $\redalg$ takes the form
	\begin{align}
	m = \begin{bmatrix}
		0 & m_{12} \\ -m_{12}\tp & 0
	\end{bmatrix}
	,
	\end{align}
	so $[m,\Delta]$ is a symmetric block matrix 
	\begin{align}
		\delta \Delta = \begin{bmatrix}
		0 & m_{12}(\lambda_2 - \lambda_1) \\
		m_{12}\tp (\lambda_2 - \lambda_1) & 0
	\end{bmatrix}
	.
	\end{align}
	Applying the commutator $[\delta \Delta, \Delta]$ gives the matrix with element $m_{ij}\paren{\lambda_2 - \lambda_1}^2$.
	We thus obtain $[\delta \Delta, \Delta] = \paren{\lambda_2 - \lambda_1}^2 m$, which finishes the proof.
	\end{enumerate}
\end{proof}

\subsubsection{The symmetric case: Grassmannians}
\label{sec:grassmannian}

Note that Grassmann manifolds are generally defined as the set of orthogonal projectors on subspaces of dimension $k$~\cite[\S\,IV.9.2]{HaLuWa06}.
It means that the eigenvalues of the corresponding isospectral manifold are zero and one, so~\eqref{eq:grassconn} simplifies into
\begin{align}
			\pairing{\conn}{\delta P}_P = {\delta P \mul P - P \mul \delta P}
			.
\end{align}

From an element $Q \in \RR^{n\times k}$ in a Stiefel manifold (see \autoref{sec:stiefel}), one obtains an element $P$ in its corresponding Grassmann manifold by
\begin{align}
P = Q \mul Q\tp
.
\end{align}

Grassman manifolds can thus be regarded as the base manifold of a Stiefel manifold considered as a principal bundle.
This might be exploited by constructing descending integrators on the Stiefel manifold, instead of the Grassmann manifolds, as we discuss in the conclusion.

\subsubsection{The flat case: integrable Lax pairs}
\label{sec:isoflat}

The differential equation on an isospectral manifold is often given directly by the infinitesimal action of $\so$, in which case it is called a \emph{Lax pair}.
Indeed, from~\eqref{eq:actisoinf}, we have
\begin{align}
	P' = \xi(P) \act P = \xi(P) \mul P - P \mul \xi(P)
	.
\end{align}

A particularly interesting case of Lax Pair system is the \emph{integrable case}, when all the eigenvalues are distinct, as for instance, the Toda lattice~\cite[\S\,X.1.5]{HaLuWa06}.
As we saw in \autoref{prop:isospectral}, this corresponds to the flat case.
The isospectral manifold is then a principal homogeneous space, so there is no ambiguity in the isotropy, and the only possibility is thus \begin{align}\pairing{\conn}{\delta P} = \xi(P),\end{align}
so the Lax pair already gives the solution of the connection problem.
This is essentially the method considered in~\cite{CaIsZa97}, and we see now that this method is equivariant.
We refer to that aforementioned paper and to~\cite[\S\,X.1.5]{HaLuWa06} for further insights into Lax pairs and integrable systems.

\subsection{Polar Decompositions}
\label{sec:polar}

There is a homogeneous manifold naturally associated with the polar decomposition of an invertible matrix in an orthogonal and a positive-definite matrix~\cite[\S\,3.2]{Ga14}.
The group is
\begin{align}
	\symgrp = \GL
\end{align}
and the manifold $\Man$ is the set of \demph{symmetric positive definite matrices}:
\begin{align}
	\Man = \setc[\Big]{P \in \RR^{d\times d}}{P = P\tp \qquad x\tp \mul P \mul x > 0 \quad\forall x \in \RR^d \setminus{0}} 
	.
\end{align}
The action of $A\in\GL$ on $P\in\Man$ is given by
\begin{align}
A \act P \coloneqq A \mul P \mul  A\tp
.
\end{align}
It is a transitive action as any positive definite matrix $P$ can be written as
\(
P = A \mul A\tp
\)
for some matrix $A \in \GL$.
We choose the origin
\begin{align}
P_0 \coloneqq \one
,
\end{align}
which gives the isotropy group
\begin{align}
	\isogrp = \OO
	.
\end{align}

We have the following connection.
\begin{proposition}
	The space of symmetric matrices $\redalg \coloneqq \setc{\xi\in \gl}{\xi = \xi\tp}$ is a symmetric reductive structure.
	The corresponding symmetric connection is defined by
  \begin{equation}
		\label{eq:polarconn}
    \pairing{\conn}{\delta P} \coloneqq \frac{1}{2}\delta P \mul P\inv
    .
  \end{equation}
\end{proposition}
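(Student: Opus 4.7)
The plan is to verify the hypotheses of \autoref{prop:formula}: it suffices to exhibit a $\symgrp$-equivariant $\gl$-valued one-form $\conn$ on $\Man$ whose value at the origin $P_0 = \one$ satisfies the consistency condition, and then to identify $\redalg$ as the image $\conn(\Tan[\origin]\Man)$.

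First I would settle the algebraic facts. The standard splitting of a matrix into its skew and symmetric parts yields $\gl = \so \oplus \redalg$. For $R \in \OO$ and $S \in \redalg$, the adjoint image $R S R\inv = R S R\tp$ is symmetric, since $(R S R\tp)\tp = R S\tp R\tp = R S R\tp$, so $\redalg$ is $\isogrp$-invariant and the decomposition is reductive. Symmetry of the reductive structure is immediate from $[S_1, S_2]\tp = S_2\tp S_1\tp - S_1\tp S_2\tp = -[S_1, S_2]$ for $S_1, S_2 \in \redalg$, which places $[\redalg, \redalg]$ inside $\so = \isoalg$.

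Next I would argue that the implicit equations determine $\pairing{\conn}{\delta P}_P$ uniquely at each $P$. Because $P$ is positive definite, the spectra of $P$ and $-P$ are disjoint, so by the classical Lyapunov theorem the system $P \xi + \xi P = \delta P$ with $\xi = \xi\tp$ admits a unique symmetric solution. This delivers a well-defined linear form $\conn \in \Omega^1(\Man, \symalg)$. At the origin $P_0 = \one$ the equations reduce to $2\xi = \delta P$, so $\pairing{\conn}{\delta P}_{\origin} = \delta P / 2 \in \redalg$; the infinitesimal action gives $\xi \act \origin = \xi + \xi\tp = \delta P$, so consistency at the origin holds, and moreover the image $\conn_{\origin}(\Tan[\origin]\Man) = \redalg$ matches the stated reductive structure.

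The main obstacle is the verification of $\symgrp$-equivariance of the implicitly defined $\conn$. I would push the Lyapunov system through the adjoint action of $A \in \GL$: starting from $P \xi + \xi P = \delta P$, I would conjugate by $A$ and rewrite the resulting relation in terms of the transported base point $APA\tp$ and transported tangent vector $A \delta P A\tp$, using the symmetry of $P$ and of $\xi$ to bring the expression into the form of the defining system at $APA\tp$; uniqueness of the symmetric Lyapunov solution is then the lever that identifies the transported $\xi$ with $\operatorname{Ad}_A \pairing{\conn}{\delta P}_P$. Once this equivariance has been established, \autoref{prop:formula} applies and produces the connection together with the identification of its image at the origin as $\redalg$, which finishes the proof.
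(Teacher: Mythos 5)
Your algebraic preliminaries (the splitting $\gl = \so \oplus \redalg$, the $\OO$-invariance of $\redalg$ under conjugation, and $[\redalg,\redalg]\subset\isoalg$) coincide with the paper's proof, as does the observation that positive definiteness of $P$ makes the Lyapunov system uniquely solvable among symmetric $\xi$. The gap is in the step you yourself single out as the main obstacle: the equivariance of the pointwise-defined symmetric Lyapunov solution cannot be obtained by conjugating the system, and in fact it fails. Under $A\in\GL$ the base point transforms as $P\mapsto A\mul P\mul A\tp$ and the tangent vector as $\delta P\mapsto A\mul \delta P\mul A\tp$, while the Lie-algebra value must transform by the adjoint action $\xi\mapsto A\mul\xi\mul A\inv$; for non-orthogonal $A$ the matrix $A\mul\xi\mul A\inv$ is not symmetric, so it cannot be the unique symmetric solution at $A\mul P\mul A\tp$ --- the uniqueness you invoke as a lever actually proves \emph{non}-equivariance. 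Concretely, for $d=2$, $P=\one$, $\delta P=\begin{bmatrix}0&1\\1&0\end{bmatrix}$ and $A=\begin{bmatrix}1&0\\0&2\end{bmatrix}$, the symmetric solution at the origin is $\delta P/2$ and its adjoint transport is $\begin{bmatrix}0&1/4\\1&0\end{bmatrix}$, whereas the symmetric Lyapunov solution at $A\act\one$ with tangent vector $A\mul\delta P\mul A\tp$ is $\begin{bmatrix}0&2/5\\2/5&0\end{bmatrix}$.

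What does survive --- and is all that the paper's own, terser, proof actually establishes --- is the claim about $\redalg$ together with the fact that \eqref{eq:polarconn} is exactly the consistency condition restricted to symmetric values, hence determines the connection \emph{at the origin}, where it reads $2\xi=\delta P$. The equivariant connection attached to $\redalg$ must then be produced by the extension principle of \autoref{prop:connequi}: writing $P=A\mul A\tp$ and transporting $\delta P\mapsto\delta P/2$ from $\origin$ yields the closed form $\pairing{\conn}{\delta P}_P=\tfrac12\,\delta P\mul P\inv$, which is readily checked to be $\GL$-equivariant and consistent. It takes values in $A\mul\redalg\mul A\inv$ rather than in $\redalg$, and satisfies the displayed implicit system only where $\delta P$ commutes with $P$. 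So your instinct that equivariance is the crux is correct, but the argument has to run through the origin-plus-extension mechanism rather than through the implicit system at a general point; the paper's proof avoids the issue by never asserting equivariance of the pointwise Lyapunov solution, and your attempt in effect shows that the implicit characterisation in the statement should be read at the origin only.
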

\begin{proof}
	The commutator of an antisymmetric matrix and a symmetric matrix is a symmetric matrix, so $[\isoalg,\redalg] \subset \redalg$.
	Moreover, we have $\symalg = \isoalg \oplus \redalg$, so $\redalg$ is a reductive structure.
	The commutator of two symmetric matrices is an skew symmetric matrix, i.e., $[\redalg, \redalg] \subset \isoalg$ and the connection is thus symmetric.

  We check that the formula~\eqref{eq:polarconn} is equivariant:
  $\pairing{\conn}{A \mul (\delta P) \mul A\trans} = \frac{1}{2} A \mul (\delta P) A \tp (A \mul P \mul A\tp)\inv
  = \frac{1}{2}A \mul(\delta P) \mul P \mul A\inv = A \act \pairing{\conn}{\delta P}$.

  Moreover, at the origin, $\pairing{\conn}{X} \act \origin = \pairing{\conn}{X} + \pairing{\conn}{X}\tp = (X + X\tp)/2 = X$, so the connection formula is consistent,
  because $X$, being tangent to a manifold of symmetric matrices, has to be symmetric.
\end{proof}

\subsection{Matrices of fixed rank}
\label{sec:fixedrank}

The space of matrices of fixed rank is considered in~\cite[\S\,IV.9.3]{HaLuWa06} and plays a fundamental role in low-rank approximations, and model reduction.
We show how this space has a natural structure of homogeneous space, and that it lacks reductive structures, and thus, connections.

For integers $m$, $n$, and $k$, we define the manifold $\Man$ of $m \times n$ matrices of rank $k$.
An element $(A,B) \in \GL[m] \times \GL[n]$ acts on such a matrix $M$ by
\begin{align}
	\label{eq:actrankk}
(A,B) \act M = A \mul M \mul B\inv
,
\end{align}
and the action is transitive on $\Man$.

Let us choose the origin at the $m\times n$ matrix $M_0$ of rank $k$ defined by
\begin{align}
	M_0 \coloneqq
\begin{bmatrix}
	0 & 0 \\ 0 & \one
\end{bmatrix}
,
\end{align}
where $\one$ denotes here the identity matrix of size $k$.
We assume the relevant block decomposition for the matrices $A$, $B$ and $M$ in the remaining of this section.

A calculation shows that the isotropy group consists of pairs of matrices of the form
\begin{align}
	\paren*{
\begin{bmatrix}
A_{1} & 0	 \\
A_{2} & C
\end{bmatrix}
,
\begin{bmatrix}
	B_{1} & 0 \\
 	B_{2} & C
\end{bmatrix}
} \in \GL[m] \times \GL[n]
.
\end{align}

\begin{proposition}
\label{prop:fixedranknoconn}
	The homogeneous space of $m\times n$ matrices of rank $k$  has a connection if and only if $m = n = k$, in which case it is the Cartan--Schouten manifold (see \autoref{sec:cartanschouten}) for $\GL[k]$.
\end{proposition}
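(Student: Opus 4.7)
My approach is to handle the two directions of the if-and-only-if separately. When $m = n = k$, matrices of rank $k$ in $\RR^{k \times k}$ are exactly the invertible ones, so $\Man = \GL[k]$ and the action $(A,B) \act M = A \mul M \mul B\inv$ is precisely the Cartan--Schouten action of $\GL[k] \times \GL[k]$ on $\GL[k]$ studied in \autoref{sec:cartanschouten}; the connections constructed there therefore descend to connections on $\Man$ under this identification.

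For the converse I assume $m > k$, which we may do without loss of generality since the transpose map $M \mapsto M\tp$ yields an equivariant identification that swaps the roles of $m$ and $n$. By \autoref{prop:reducaff}, such a connection corresponds to an $\isogrp$-equivariant linear section $\sigma \colon \symalg/\isoalg \to \symalg$. With respect to the block decomposition of $\gl[m] \oplus \gl[n]$ with sizes $(m-k)+k$ on each factor, the quotient $\symalg/\isoalg$ is parametrized by the top-right blocks $\alpha_3 \in \RR^{(m-k)\times k}$ and $\beta_3 \in \RR^{(n-k)\times k}$ together with the difference $\gamma = \alpha_4 - \beta_4 \in \gl[k]$ of the bottom-right blocks.

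I would then derive a contradiction by combining two types of isotropy elements. First, the scaling element whose top-left block is $\lambda\one$ and bottom-right is $\one$ (taken in both factors when $n > k$, or only in the first when $n = k$) lies in $\isogrp$; equivariance of $\sigma$ under this one-parameter subgroup forces weight preservation, so the image of the $\gamma$-slot is supported on the weight-one blocks $\alpha_1, \alpha_4, \beta_1, \beta_4$, while the image of the $(\alpha_3, \beta_3)$-slot lies in the corresponding top-right blocks and is, by the section condition, the identity there. Second, the unipotent element $\xi = (\xi_A, 0) \in \isoalg$ whose $\xi_A$ has only the bottom-left block $v \in \RR^{k \times (m-k)}$ nonzero, applied via the reductivity constraint $[\xi, \sigma(\alpha_3, 0, 0)] \in \redalg$, yields an explicit formula for $\sigma$ on $\gamma$-inputs of the rank-one form $v \mul \alpha_3$. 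Applying reductivity once more to $[\xi, \sigma(0, 0, \gamma)]$ yields the identity $v \mul (P\gamma) = (Q\gamma) \mul v$ for all $v, \gamma$, where $P, Q$ are the linear maps giving the $\alpha_1$- and $\alpha_4$-components of $\sigma$ on the $\gamma$-slot. This forces $Q\gamma$ to be a scalar multiple of $\one$ for every $\gamma$, incompatible with the prior formula $Q(v\alpha_3) = v\alpha_3$ unless every rank-one matrix $v\alpha_3$ is scalar, which fails for a direct choice of $v$ and $\alpha_3$.

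The hard part is the book-keeping across sub-cases: the scaling element in isotropy must be adapted depending on whether $n > k$ or $n = k$, and the final contradiction takes slightly different forms for $k \geq 2$ versus $k = 1$ (in the latter, every element of $\gl[1]$ is trivially scalar, so the contradiction must instead come from combining the constraint on $Q$ with the companion formula $P(v\alpha_3) = -\alpha_3 \mul v$ obtained from the same commutator). The weight decomposition is the essential simplification: it cuts $\sigma$ down to a few linear maps between low-dimensional spaces, on which reductivity becomes a concrete matrix identity that fails precisely unless $m = n = k$.
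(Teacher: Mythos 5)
Your proof is correct and is essentially the paper's argument: both pass to the reductive-complement formulation, use the (block-)scaling part of the isotropy to confine $\sigma(\alpha_3,\beta_3,0)$ to the top-right blocks, and then apply the adjoint action of a strictly lower-block-triangular isotropy element twice to the image of $\sigma$, arriving at the same identity $v\mul\alpha_3\mul v=0$ that the paper writes as $A\mul\alpha\mul A=0$. The remaining differences are cosmetic — the paper concludes by the choice $A=\alpha\tp$ where you quantify over all $v$ to force $Q\gamma$ to be scalar (with the separate $k=1$ case handled via $P$), and it treats the two factors symmetrically where you reduce to $m>k$ by transposition.
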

\newcommand*\baone{\overline{a_1}}
\newcommand*\batwo{\overline{a_2}}
\newcommand*\bbone{\overline{b_1}}
\newcommand*\bbtwo{\overline{b_2}}
\newcommand*\bcc{\overline{c}}
\begin{proof}
	First, we notice that if $m = n = k$, then the rank $k$ matrices are simply the invertible matrices of size $k$, and the action~\eqref{eq:actrankk} is the Cartan--Schouten action~\eqref{eq:actcartanschouten}.

	We now assume that there exists a reductive structure, and proceed to show that it implies $m = n = k$.

	A reductive structure $\redalg \subset \symalg$ is parameterised by matrices $\alpha$, $\beta$, $\gamma$, and linear maps $\baone$, $\batwo$, $\bbone$, $\bbtwo$ and $\bcc$, all depending linearly on $\alpha$, $\beta$, $\gamma$, such that an element $m \in \redalg$ takes the form
	\begin{align}
		m = \paren*{
	\begin{bmatrix}
		\baone & \alpha \\
		\batwo & \bcc - \gamma
	\end{bmatrix}
	,
	\begin{bmatrix}
		\bbone & \beta \\
		\bbtwo & \bcc + \gamma
	\end{bmatrix}
}
.
	\end{align}
	We denote the infinitesimal adjoint action of an element $\xi \in \isoalg$ by
	\begin{align}
	\xi \act m \coloneqq [\xi, m] = \xi \mul m - m \mul \xi
	.
	\end{align}
	\begin{enumerate}
		\item Choose
			\begin{align}
				\xi = \paren*{
				\begin{bmatrix}
					1 & 0 \\ 0 & 0
				\end{bmatrix}
				,
				\begin{bmatrix}
					1 & 0 \\ 0 & 0
				\end{bmatrix}
			} \in \isoalg
			.
			\end{align}
			We obtain that
			\begin{align}
				\xi \act m = 
				\paren*{
\begin{bmatrix}
	0 & \alpha \\ -\batwo & 0
\end{bmatrix}				
,
\begin{bmatrix}
	0 & \beta \\ -\bbtwo & 0
\end{bmatrix}				
				}
				,
			\end{align}
			so we obtain
			\begin{align}
				\batwo(\alpha, \beta, 0) = - \batwo(\alpha, \beta, \gamma)
				.
			\end{align}
			By choosing $\gamma = 0$, we first obtain $\batwo(\alpha,  \beta, 0) = 0$, and thus also $\batwo(\alpha, \beta, \gamma) = 0$ for all values of the parameters $\alpha$, $\beta$, $\gamma$.
		\item
			Choose now an arbitrary element
			\begin{align}
			m = \paren*{
\begin{bmatrix}
	\baone & \alpha \\
	0 & \tilde{c}
\end{bmatrix}
, 
\star
			}
			\in \redalg
			,
			\end{align} 
			where $\star$ denotes here and in the sequel an arbitrary element.
			Choose an element
			\begin{align}
				\xi = \paren*{
\begin{bmatrix}
	0 & 0 \\ A & 0
\end{bmatrix}
, \star
				} \in \isoalg
				.
			\end{align}
			We obtain
\begin{align}
	\xi \act m = \paren*{
\begin{bmatrix}
	-\alpha \mul A & 0 \\
	A  \mul\baone - \tilde{c}  \mul A & A \mul \alpha
\end{bmatrix}, \star
	}
	,
\end{align}
so we obtain the condition
\begin{align}
A  \mul\baone - \tilde{c}  \mul A = 0
.
\end{align}
Now, the element $\xi\act m$ also belongs to $\redalg$, so the condition $\xi\act (\xi \act m) \in \redalg$ gives
\begin{align}
A \mul (-\alpha \mul A) - A \mul \alpha \mul A = 0
.
\end{align}
\item
	We choose in particular $A = \alpha\tp$, and we get
	\begin{align}
		\label{eq:alphathree}
		\alpha\tp \mul \alpha \mul \alpha\tp = 0
		.
	\end{align}
\item
	Now, the parameter $\alpha$ is arbitrary, so if $m > k$, we can choose it such that either $\alpha \mul \alpha\tp$ or $\alpha\tp \mul \alpha$ is the identity matrix, but~\eqref{eq:alphathree} then gives $\alpha = 0$, which is a contradiction.
	We conclude that $m = k$.
	A similar reasoning for $\beta$  would give $n = k$.
	\end{enumerate}
\end{proof}

The following example of a nonreductive space, mentioned in~\cite[\S\,X.2]{KoNo69},
\begin{align}
	\symgrp =  \group{SL}(2, \RR),
	\qquad
	\isogrp = \setc*{\begin{bmatrix}
		1 & \lambda \\ 0 & 1
	\end{bmatrix}}{\lambda \in \RR}
,
\end{align}
corresponds to $2\times 1$ matrices which are nonzero (i.e., which have rank one), so it is the case $m=2$, $n=1$ and $k=1$ in \autoref{prop:fixedranknoconn}.

\section{Conclusion and Open Problems}
\label{sec:conclusion}

The main message of this paper is that equivariant isotropy map allow to construct equivariant homogeneous space integrators from Lie group integrators (skeletons).
Moreover, when that equivariant isotropy map is of order zero, then it is equivalent to a reductive structure or an invariant principal connection, both of which are standard tools in differential geometry~\cite[\S\,X.2]{KoNo69}.

We examine the consequences of using a connection with a skeleton.

% The first consequence is that, when a homogeneous space has no reductive structure, then there are no zero-th order isotropy maps.
The first consequence is that some homogeneous spaces currently lack (equivariant) integrators.
A fundamental such example, as we showed in \autoref{sec:fixedrank}, is the homogeneous space of matrices of fixed rank.

Another consequence is that, as the connection takes its values in $\redalg$,  one only need compute exponentials of elements of $\redalg$.
For instance in the affine case, only translations are needed.
Note that in general, the computation of exponentials is still the most difficult part in the implementation, and we refer to~\cite[\S\,8]{IsMKNoZa00} for some possible solutions.

In a similar vein, note that if the motion map is the exponential, one need to know one exponential on each $\isogrp$-orbit since
\begin{align}
	\exp(h \act \xi) = h \act \exp(\xi) \qquad h\in\isogrp \quad \xi \in \redalg
	.
\end{align}
For instance, on a sphere, all the ``great circle'' are in a sense, the same, as there is only one $\isogrp$-orbit.

Let us mention some open questions to be investigated in future work.
\begin{itemize}
	\item
		It is not clear yet whether the isotropy map has to be equivariant for the integrator to be.
		Isotropy map equivariance is necessary in some specific cases, but we do not expect that result to extend to general cases.
		Nevertheless, using equivariant isotropy maps is still the simplest way to obtain equivariant homogeneous space integrators,
		and we are not aware of any concrete example of an equivariant integrator constructed with a non-equivariant isotropy map.
	\item
		Homogeneous spaces lacking a connection such as the space of matrix of fixed rank considered in \autoref{sec:fixedrank} might have higher order connections.
		Such connections are still local (and thus, computationally tractable), but depend on higher order derivatives of the vector field~\cite{Ve14}.
		We do not know if such a higher order equivariant connection exists for the space of fixed rank matrices.
	\item
As noted in~\cite{EdArSm98}, the representation of points on the Grassmannians as $n\times n$ matrices may be unwieldy, so there might be strategies to lift the equation on the Stiefel manifold and integrate there instead.
This is the strategy proposed in~\cite[\S\,IV.9.2]{HaLuWa06}, but we do not know if this gives equivariant integrators.
	\item 
		The retraction methods exposed in~\cite{CeOw03} on Stiefel manifolds, are not of the same type as those in this paper.
		We believe that connections may be used to conceive retraction methods of the same type on other homogeneous spaces.
		We do not know under which conditions retraction methods are equivariant.
	% \item How do the computation of geodesics on $\redalg$, and $\symgrp$-orbits, simplify methods in practical cases?
	\item
		We showed how some homogeneous spaces have a \emph{symmetric} connection.
		In particular, in the Cartan--Schouten case, one could use the symmetric connection.
		When used along with skeletons, only the \emph{geodesics} of the connection matter, so all the connections studied in \autoref{sec:cartanschouten} will give the same methods, but there might be a way to exploit the symmetric connection.
	\item
		In \autoref{prop:scalred} we show that the connection is unique, under some assumptions.
		We don't know of any such results for the other homogeneous spaces of interest, in particular, those presented in \autoref{sec:connexamples}.
	\item
		Finally, we mention other homogeneous spaces which are used in physics, such as the Galilean and Poincaré groups, and hyperbolic spaces such as the Poincaré half space $\group{SL}(d)/\SO$, the de Sitter space, or the Lagrangian Grassmannian $\group{U}(d)/\OO$~\cite{PiTa08}, the flag manifolds~\cite[\S\,2.5]{Ki08}, not to mention the complex and quaternionic variants of the Stiefel and Grassmann manifolds.
		Note that, even if there may be theoretical results of existence of connection, in particular when the isotropy group is compact, as we saw in \autoref{prop:connexistence}, it does not mean that there is an actual, practically computable connection formula.
		Such examples are given by the isospectral manifolds \autoref{sec:isospectral} and the symmetric positive definite matrices \autoref{sec:polar}, for which the connection exists, but may be difficult to compute.
\end{itemize}

\section*{Implementation}

An implementation using the stage tree structure described in \autoref{sec:rungekutta} is available at \url{https://github.com/olivierverdier/homogint}.

\section*{Acknowledgements}

This research was supported by the Spade~Ace~Project
and by the J.C.~Kempe memorial fund.

\printbibliography
% \bibliographystyle{plainnat}
% \bibliography{references}

\end{document}